
\documentclass[a4paper,10pt]{article}
\usepackage[utf8]{inputenc}
\usepackage{pdflscape}
\usepackage[linesnumbered,ruled,vlined]{algorithm2e}
\usepackage{comment,xcomment,amssymb,amsmath,color}
\usepackage{rotating,xypic,array}
\usepackage{latexsym,mathtools,amsthm}
\usepackage{subfig,rotating}
\usepackage{tikz,fancyvrb}
\usepackage{booktabs}  
\usetikzlibrary{arrows.meta,automata}
\usepackage{enumitem}
\usepackage{url} 
\usepackage{longtable}
\usepackage[section]{placeins}
\usepackage[longtable]{multirow}
\usepackage[colorlinks,citecolor=blue]{hyperref}

\title{A construction of Einstein solvmanifolds not based on nilsolitons}
\author{Diego Conti, Federico A. Rossi and Romeo Segnan Dalmasso}

\newcommand\CC{C\nolinebreak\hspace{-.05em}\raisebox{.4ex}{\relsize{-3}{\textbf{+}}}\nolinebreak\hspace{-.10em}\raisebox{.4ex}{\relsize{-3}{\textbf{+}}}}
\newtheorem{theorem}{Theorem}[section]
\newtheorem{lemma}[theorem]{Lemma}

\newtheorem{proposition}[theorem]{Proposition}
\theoremstyle{definition}

\newtheorem{example}[theorem]{Example}

\theoremstyle{remark}
\newtheorem{remark}[theorem]{Remark}

\newcommand{\abs}[1]{\left\vert#1\right\vert}
\newcommand{\R}{\mathbb{R}}
\newcommand{\lie}[1]{\mathfrak{#1}}     
\newcommand{\g}{\lie{g}}
\newcommand{\Z}{\mathbb{Z}}

\newcommand{\C}{\mathbb{C}}

\newcommand{\hook}{\lrcorner\,}

\newcommand{\SO}{\mathrm{SO}}

\newcommand{\id}{\operatorname{Id}}   

\newcommand{\Span}[1]{\operatorname{Span}\left\{#1\right\}}

\newcommand{\tran}[1]{\hspace{.2mm}\prescript{t\hspace{-.5mm}}{}{#1}}

\newcommand{\st}{\;|\;}          
\DeclareMathOperator{\Ric}{Ric} 

\DeclareMathOperator{\Aut}{Aut}

\DeclareMathOperator{\diag}{diag}

\DeclareMathOperator{\Der}{Der}
\DeclareMathOperator{\ad}{ad}

\DeclareMathOperator{\logsign}{logsign}

\DeclareMathOperator{\Tr}{tr}
\DeclareMathOperator{\sign}{sign}

\newcolumntype{C}{>{$}c<{$}}
\newcolumntype{L}{>{$}l<{$}}
\newcolumntype{R}{>{$}r<{$}}

\usepackage[backend=biber]{biblatex}
\addbibresource{solitons.bib}
\newcommand{\diagonal}{\mathrm{D}}

\begin{document}
\VerbatimFootnotes
\maketitle
\begin{abstract}
We construct indefinite Einstein solvmanifolds that are standard, but not of pseudo-Iwasawa type. Thus, the underlying Lie algebras take the form $\g\rtimes_D\R$, where $\g$ is a nilpotent Lie algebra and $D$ is a nonsymmetric derivation. Considering nonsymmetric derivations has the consequence that $\g$ is not a nilsoliton, but satisfies a more general condition.

Our construction is based on the notion of nondiagonal triple on a nice diagram. We present an algorithm to classify nondiagonal triples and the associated Einstein metrics. With the use of a computer, we obtain all solutions up to dimension $5$, and all solutions in dimension $\leq9$ that satisfy an additional technical restriction.

By comparing curvatures, we show that the Einstein solvmanifolds of dimension $\leq 5$ that we obtain by our construction are not isometric to a standard extension of a nilsoliton.
\end{abstract}

\renewcommand{\thefootnote}{\fnsymbol{footnote}}
\footnotetext{\emph{MSC class 2020}: 53C25 (\emph{Primary}); 53C30, 53C50, 22E25 (\emph{Secondary})}
\footnotetext{\emph{Keywords}: Einstein solvmanifolds. Indefinite metrics. Standard decompositions}
\renewcommand{\thefootnote}{\arabic{footnote}}

\section*{Introduction}
Einstein manifolds of negative curvature and maximal symmetry have been studied for decades. After contributions by many authors (see \cite{LL14structure,JP17step,BL18eucledean,Jab15Bstrongly}), it was proved in \cite{Bohm_Lafuente_2023} that every homogeneous Einstein Riemannian manifold of negative curvature can be represented as a solvmanifold, i.e. a solvable Lie group endowed with a left-invariant metric (a statement previously known as the Alekseevsky conjecture).

Given an Einstein solvmanifold, at the Lie algebra level, there is an orthogonal decomposition $\g\rtimes\lie a$, where $\lie g$ is the nilradical and $\lie a$ an abelian subalgebra (\cite{Lauret_2010}); such a decomposition is called a standard decomposition. Up to isometry, one can then assume that $\lie a$ acts by symmetric derivations (\cite{Heber_1998}); the standard decomposition is then said to be of Iwasawa type. Furthermore, the restriction of the metric to $\lie g$ satisfies the equation
\begin{equation}
\label{eqn:nilsoliton}
\Ric=\lambda \id+D, \quad D\in\Der\g;
\end{equation}
one then says that $\g$ is a \emph{nilsoliton}; the terminology is motivated by the fact that a left-invariant metric on a nilpotent Lie group satisfies~\eqref{eqn:nilsoliton} if and only if it is a Ricci soliton~\cite{Lauret_2001,Jablonski2014}.

Indefinite homogeneous Einstein manifolds are less constrained (partly because restrictions on the sign of the scalar curvature cease to be significant if one does not fix the signature): they need not be solvmanifolds, or even diffeomorphic to $\R^n$ (consider the symmetric spaces $\SO_0(p,q)/\SO(p)\times\SO(q)$), and even if one restricts to solvmanifolds, the Einstein condition does not imply the existence of a standard decomposition $\g\rtimes\lie a$. Furthermore, if a standard decomposition does exist, it may not be the case that $\lie a$ acts by symmetric derivations, even up to isometry.

Nevertheless, the constructive aspects of the positive-definite theory generalize to arbitrary signature. For instance, large classes of Einstein solvmanifolds can be obtained by extending indefinite nilsolitons (see~\cite{Conti_Rossi_best_before,Conti_Rossi_2022}). The solvmanifolds obtained this way admit standard decompositions of pseudo-Iwasawa type, i.e. they take the form $\g\rtimes_D\R$ where $D$ is symmetric and $\g$ is the nilradical.

Constructing Einstein solvmanifolds which are not of pseudo-Iwasawa type is more difficult. In~\cite{Conti_Rossi_Segnan_2023} we obtained the first such examples in the guise of standard Sasaki-Einstein solvmanifolds; since Sasaki solvmanifolds can never be of pseudo-Iwasawa type, those metrics are not isometric to any standard solvmanifold of pseudo-Iwasawa type (\cite[Proposition~2.6]{Conti_Rossi_Segnan_korean}).

The construction of~\cite{Conti_Rossi_Segnan_2023} is based on a generalization of the nilsoliton condition: on a nilpotent Lie algebra, one considers a metric and a derivation $D$ with symmetric part $D^s=\frac12(D+D^*)$ such that for $\tau=\pm1$ the following conditions involving the Ricci operator hold:
\begin{equation}
\label{eqn:generalizednilsolitoningeneral}
\Ric=\tau\biggl(- \Tr ((D^s)^2) \id- \frac12[D,D^*] +(\Tr D)D^s\biggr), \quad \Tr (\ad v\circ D^*)=0,\ v\in\g.
\end{equation}
Rather than attack this equation directly, the method of~\cite{Conti_Rossi_Segnan_2023} is to find solutions by inverting contact (symplectic) reduction, which is a peculiar feature of Sasaki (K\"ahler) geometry.

By contrast, in this paper we leave contact geometry aside and give a direct construction of solutions of~\eqref{eqn:generalizednilsolitoningeneral}. Unlike the Sasaki case, we do not have a general criterion to exclude that the resulting metrics are isometric to solvmanifolds of pseudo-Iwasawa type, but we show that this is not generally the case by explicit curvature computations in low dimensions.

Our construction uses Lie algebras admitting a special type of basis, introduced in \cite{LauretWill:EinsteinSolvmanifolds,Nikolayevsky} under the name of \emph{nice bases}. The metrics we consider have an orthonormal nice basis, but the derivation $D$ is not diagonal in this basis. It is a feature of our construction that $D$ is always diagonalizable, although this does not follow by any means from~\eqref{eqn:generalizednilsolitoningeneral}.

The precise ansatz we impose on $D$ is that for every $i=1,\dotsc, n$, the derivation $D$ has at most one nonzero element on either the $i$-th row or the $i$-th column which is not on the diagonal. Some of the entries of $D$ are forced to be zero by the condition $\Tr (\ad V\circ D^*)=0$. Having assumed that the nice basis is orthogonal, the left-hand side of~\eqref{eqn:generalizednilsolitoningeneral} is diagonal relative to that basis, so we need to impose that the right-hand side is zero off the diagonal; the resulting constraints on $D$ are linear, due to the special nature of our ansatz. The diagonal part of the right-hand side of~\eqref{eqn:generalizednilsolitoningeneral}, however, depends nonlinearly on $D$. A full characterization of the conditions is given in Lemma~\ref{lemma:almostdiagonal}.

In order to obtain a linear problem, we change the point of view; rather than fix the nice Lie algebra and consider an arbitrary diagonal metric, we consider the set of nice Lie algebras which share the same set of indices ${i,j,k}$ such that $[e_i,e_j]=c_{ijk}e_k\neq0$ (in the terminology of~\cite{Conti_Rossi_2019}, they have the same nice diagram), and leave the structure constants relative to a fixed orthonormal basis as unknowns. We then determine the derivation $D$ in terms of the nice diagram, the set $\mathcal A$ of indices corresponding to offdiagonal nonzero entries of $D$,
and a function $A\colon\mathcal A\to\R$  characterizing the actual entries of $D$ in a suitable sense. We call $(D,\mathcal A,A)$ a \emph{nondiagonal triple}, and determining it is a linear problem. For a fixed nondiagonal triple, the structure constants $c_{ijk}$ must then be computed in such a way that the Jacobi identity holds, the Ricci operator takes the required form, and $D$ is a derivation. This is a nonlinear problem, but it can be solved effectively using the fact that finding a diagonal metric on a nice Lie algebra with prescribed Ricci operator is a  linear problem in the squared structure constants $c_{ijk}^2$. The full conditions that must be satisfied in order to obtain a solution of~\eqref{eqn:generalizednilsolitoningeneral} from a nondiagonal triple are given in Theorem~\ref{thm:almostdiagonalwithdiagrams}.

In view of a classification, we then introduce a suitable notion of equivalence between nondiagonal triples, taking into account sign flipping in the basis elements and automorphisms of the nice diagrams.

We then present Algorithm~\ref{alg:classification}, which classifies nondiagonal triples and the associated solutions of~\eqref{eqn:generalizednilsolitoningeneral} up to equivalence; our implementation of the algorithm can be found at~\cite{esticax}. This algorithm is mostly effective in low dimensions or under two technical assumptions, namely that the so-called root matrix is surjective and that the linear system determining $A$ has a unique solution, ensuring that the equations to be solved do not depend on parameters.

We obtain a classification of nondiagonal triples and the associated solutions of~\eqref{eqn:generalizednilsolitoningeneral}, up to equivalence, both in dimension $\leq 5$ (Tables~\ref{table:34} and~\ref{table:5}) and in dimension $\leq 9$ under the two technical assumptions outlined above (see ancillary files). Each entry in these tables determines a standard Einstein solvmanifold in one dimension higher, which is not of pseudo-Iwasawa type. We argue that these metrics differ from the known metrics obtained by extending a nilsoliton by computing the curvature (Proposition~\ref{prop:notthesame}).

\textbf{Acknowledgments}
The authors acknowledge GNSAGA of INdAM and
the PRIN project n. 2022MWPMAB ``Interactions between Geometric Structures and Function Theories''.

D. Conti acknowledges the MIUR Excellence Department Project awarded to the Department of Mathematics, University of Pisa, CUP I57G22000700001.

F.A.~Rossi acknowledges the INdAM-GNSAGA project CUP E55F22000270001 ``Curve algebriche e loro applicazioni''.

\section{Generalized nilsolitons and diagonal metrics}
In this section we recall the results, terminology and notation from \cite{LauretWill:EinsteinSolvmanifolds,Nikolayevsky,Conti_Rossi_2019,Conti_Rossi_2020} that will be used in the sequel.

We shall consider metrics on a Lie algebra $\g$, i.e. nondegenerate scalar products that determine a left-invariant pseudo-Riemannian metric on a Lie group with Lie algebra $\g$; the Levi-Civita connection and its curvature can then be expressed at the Lie algebra level. In particular, we shall denote by $\Ric\colon\g\to\g$ the Ricci operator; the Einstein condition reads $\Ric=\lambda \id$, where $\id$ is the identity operator on $\g$; the notation $\id_\g$ will also be used when necessary.

Given a Lie algebra $\tilde\g$ with a metric $\tilde g$, we say that a \emph{standard decomposition} is an orthogonal decomposition $\tilde \g=\g\rtimes\lie a$, where $\g$ is a nilpotent ideal and $\lie a$ an abelian subalgebra. As $\g$ and $\lie a$ are required to be orthogonal, the restriction of the metric to $\g$ is nondegenerate, and will be denoted by $g$.

The standard decomposition is generally not unique. A standard decomposition is said to be of \emph{pseudo-Iwasawa type} if $\ad X$ is symmetric for all $X$ in $\lie a$. Given an Einstein solvmanifold of pseudo-Iwasawa type $\tilde \g=\g\rtimes\lie a$, $\g$ satisfies the nilsoliton equation~\eqref{eqn:nilsoliton}. We are interested in Einstein metrics which admit a standard decomposition, but not one of pseudo-Iwasawa type. The standard decomposition will take the form $\g\rtimes_D\R$, where $D$ differs from its symmetric part $D^s=\frac12(D+D^*)$. The condition that the metric on the nilpotent factor $\g$ must satisfy is given by the following result, generalizing~\eqref{eqn:nilsoliton} to the case where $D$ is not assumed to be
symmetric:
\begin{theorem}[\protect{\cite[Proposition~2.1]{Conti_Rossi_Segnan_2023}}]
\label{thm:extendgeneralizednilsoliton}
Let $\g$ be a nilpotent Lie algebra with a pseudo-Riemannian metric $g$, $D$ a derivation and $\tau=\pm1$. Then the metric $\tilde g=g+\tau e^0\otimes e^0$ on
$\tilde\g=\g\rtimes_D\Span{e_0}$ is Einstein if and only if
\[\Ric=\tau\biggl(- \Tr ((D^s)^2) \id- \frac12[D,D^*] +(\Tr D)D^s\biggr), \qquad \Tr (\ad v\circ D^*)=0,\ v\in\g;\]
in this case, $\widetilde{\Ric}= - \tau\Tr ((D^s)^2)\id_{\tilde\g}$.
\end{theorem}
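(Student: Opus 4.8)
The plan is to compute the full curvature of $\tilde g$ directly from the Koszul formula and then read off the Einstein condition block by block with respect to the orthogonal splitting $\tilde\g=\g\oplus\Span{e_0}$. First I would determine the Levi-Civita connection $\tilde\nabla$ of $\tilde g$. Since $\g$ is an ideal, $e_0$ is orthogonal to $\g$ with $\tilde g(e_0,e_0)=\tau$, and $\ad_{e_0}|_\g=D$, the Koszul formula for left-invariant fields (whose metric coefficients are constant) gives, writing $\nabla$ for the Levi-Civita connection of $(\g,g)$ and $D^a=\frac12(D-D^*)$ for the skew part of $D$,
\[\tilde\nabla_X Y=\nabla_X Y+\tau g(D^sX,Y)e_0,\qquad \tilde\nabla_X e_0=-D^sX,\qquad \tilde\nabla_{e_0}X=D^aX,\qquad \tilde\nabla_{e_0}e_0=0,\quad X,Y\in\g.\]
The only subtlety here is the appearance of $D^s$ in the first two identities and of $D^a$ in the third, which comes from symmetrizing $\ad_{e_0}$ against the metric.

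Next I would compute the Ricci tensor $\widetilde{\ric}$ of $\tilde g$ as a trace over a pseudo-orthonormal basis $e_1,\dots,e_n$ of $\g$ (with signs $\eps_i$) together with $e_0$, treating the three blocks separately. For the $(e_0,e_0)$ block one finds $\widetilde R(X,e_0)e_0=D^aD^sX-D^sDX$, and tracing yields $\widetilde{\ric}(e_0,e_0)=-\Tr((D^s)^2)$, so the $e_0$-component of $\widetilde{\Ric}\,e_0$ is $-\tau\Tr((D^s)^2)$. For the $\g$-block I would separate the intrinsic curvature of $(\g,g)$ from the correction terms produced by the $e_0$-components of $\tilde\nabla$; after collecting them and using the purely algebraic identity $[D^a,D^s]=\frac12[D,D^*]$ together with $\Tr D=\Tr D^s$, this gives $\widetilde{\Ric}|_\g=\Ric-\tau(\Tr D)D^s+\frac\tau2[D,D^*]$. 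For the mixed block, $\widetilde R(e_i,X)e_0$ expands through Koszul into several trace-type sums; they vanish by a symmetric/antisymmetric cancellation of the matrix of $D^s$ against the structure constants and by nilpotency of $\g$ (so $\Tr\ad_v=0$), except for one that leaves $\widetilde{\ric}(X,e_0)=-\Tr(\ad_X\circ D^s)$.

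At this point I would impose $\widetilde{\Ric}=\tilde\lambda\,\id_{\tilde\g}$ and extract the three conditions. The $(e_0,e_0)$ component forces $\tilde\lambda=-\tau\Tr((D^s)^2)$, which is the asserted value of $\widetilde{\Ric}$; substituting this $\tilde\lambda$ into the $\g$-block identity rearranges exactly to $\Ric=\tau(-\Tr((D^s)^2)\id-\frac12[D,D^*]+(\Tr D)D^s)$. Finally the vanishing of the mixed block reads $\Tr(\ad_X\circ D^s)=0$; to turn this into the stated $\Tr(\ad v\circ D^*)=0$ I would prove that $\Tr(\ad_X\circ D)=0$ automatically, because a derivation $D$ preserves every term $\g^k$ of the lower central series while each $\ad_X$ maps $\g^k$ into $\g^{k+1}$, so $\ad_X\circ D$ strictly lowers that filtration and has vanishing diagonal blocks. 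Hence $\Tr(\ad_X\circ D^s)=\frac12\Tr(\ad_X\circ D^*)$ and the two conditions coincide. The converse direction is the same computation read backwards.

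The main obstacle I anticipate is the mixed block: organizing the Koszul expansion so the cancellations are visible, and in particular recognizing that the natural output $\Tr(\ad_X\circ D^s)$ equals $\frac12\Tr(\ad_X\circ D^*)$ by nilpotency, which is precisely what produces the asymmetric-looking condition $\Tr(\ad v\circ D^*)=0$ rather than one on $D^s$. Secondarily, careful bookkeeping of the signs $\eps_i$ and $\tau$ throughout the indefinite traces, and keeping the distinction between the Ricci tensor and the Ricci operator, will require attention.
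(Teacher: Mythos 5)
Your proposal is correct: I verified the Koszul-formula expressions for $\tilde\nabla$, the three curvature blocks (in particular $\widetilde R(X,e_0)e_0=D^aD^sX-D^sDX$ tracing to $\widetilde{\ric}(e_0,e_0)=-\Tr((D^s)^2)$, and the $\g$-block identity $\widetilde{\Ric}|_\g=\Ric-\tau(\Tr D)D^s+\frac{\tau}{2}[D,D^*]$), the algebraic identity $[D^a,D^s]=\frac12[D,D^*]$, and the lower-central-series argument giving $\Tr(\ad_X\circ D)=0$, hence $\Tr(\ad_X\circ D^s)=\frac12\Tr(\ad_X\circ D^*)$; all of these are sound and assemble into the stated equivalence with the correct Einstein constant. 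Note that the present paper contains no proof of this statement to compare against: Theorem~\ref{thm:extendgeneralizednilsoliton} is imported verbatim from \cite[Proposition~2.1]{Conti_Rossi_Segnan_2023}. Your block-by-block computation relative to the orthogonal splitting $\tilde\g=\g\oplus\Span{e_0}$ is the natural direct route to this result, and the one subtle point you single out --- that nilpotency is what converts the mixed-block condition on $D^s$ into the asymmetric-looking condition $\Tr(\ad v\circ D^*)=0$ --- is indeed the crux of the statement.
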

For the remainder of the article we set $\tau=1$, thus solving the equation
\begin{equation}\label{eqn:generalizednilsoliton}
\Ric=- \Tr ((D^s)^2) \id- \frac12[D,D^*] +(\Tr D)D^s,\qquad \Tr (\ad v\circ D^*)=0,\ v\in\g.
\end{equation}
One can recover the metrics with $\tau=-1$ by flipping the overall sign of the metric.

Our approach to finding solutions of~\eqref{eqn:generalizednilsoliton} is through nice Lie algebras. Given a Lie algebra $\g$, a basis $e_1,\dotsc, e_n$ with dual basis $e^1,\dotsc, e^n$ is a \emph{nice basis} if each  $[e_i,e_j]$, $e_i\hook de^j$ is a multiple of a basis element (see \cite{LauretWill:EinsteinSolvmanifolds,Nikolayevsky}); a nice Lie algebra is a Lie algebra endowed with a nice basis. To such a basis one can associate a directed graph with nodes $\{1,\dotsc, n\}$, and such that $i\to k$ is an arrow if and only if $[e_i,e_j]$ is a nonzero multiple of $e_k$ for some $j$. The arrow $i\to k$ is then decorated with the label $j$; we will write $i\xrightarrow{j}k$. It is clear from the definition that for fixed $i,j$ there can be at most one arrow $i\xrightarrow{\bullet} j$ and at most one arrow $i\xrightarrow{j}\bullet$. In addition, if $i\xrightarrow{j}k$ is an arrow, then $j\xrightarrow{i}k$ is also an arrow. Nilpotency implies that the graph is acyclic; additionally, it satisfies a condition involving concatenated arrows which follows from the Jacobi identity. All these conditions (see~\cite{Conti_Rossi_2019} for details) define a class of labeled directed acyclic graphs known as \emph{nice diagrams}. These are combinatorial objects which can in principle be classified in any fixed dimension.

Having assigned a nice diagram $\Delta$, any nice Lie algebra whose underlying nice diagram is $\Delta$ can be expressed by assigning structure constants $c_{ijk}$ to every arrow $i\xrightarrow{j}k$ in such a way that the Jacobi identity holds. However, the solution may not exist or be unique; thus, the correspondence between nice Lie algebra and nice diagrams is not bijective. In particular, if each basis vector $e_i$ is rescaled by a constant $g_i$, a new solution $\{c'_{ijk}\}$ is obtained by $c'_{ijk}=\frac{g_k}{g_ig_j}c_{ijk}$. This can be seen as follows.

Given a nice diagram $\Delta$, one defines the \emph{root matrix} $M_\Delta$, which has a row of the form
\[\bigl(0,\dotsc, \underbrace{-1}_i,\dotsc, \underbrace{-1}_j,\dotsc, \underbrace{1}_k,\dotsc, 0\bigr)\]
for every pair of arrows $i\xrightarrow{j}k$, $j\xrightarrow{i}k$. The rows of $M_\Delta$ represent the weights for the action of the diagonal group $D_n$
(in the basis $\{e^i\otimes e_i\}$ of its Lie algebra $\lie d_n$) on the $m$-dimensional space of structure constants $\{c_{ijk}\}$. Accordingly, we can view $M_\Delta$ as a linear map $\lie d_n\to\lie d_m$, which exponentiates to a map $D_n\to D_m$ which we denote by $e^{M_\Delta}$; identifying $\lie d_n$ with $\R^n$ and $\lie d_m$ with $\R^m$, if the $I$-th row of $M_\Delta$ corresponds to $i\xrightarrow{j}k$, the $I$-th component of $e^{M_\Delta}(g_1,\dotsc, g_n)$ is $\frac{g_k}{g_ig_j}$.

It then follows that when $M_\Delta$ is surjective the structure constants $c_{ijk}$ can be normalized to $\pm1$ by rescaling. Otherwise, continuous families of Lie algebras with the same diagram may occur. In order to avoid the difficulty of having to solve equations depending on parameters, we will only consider nice diagrams  with surjective root matrix in this paper.

Nice Lie algebras have two features which make them a good candidate for the construction of solutions of~\eqref{eqn:nilsoliton} and~\eqref{eqn:generalizednilsoliton}. First, one has fine control over the derivations, as the following holds:
\begin{proposition}[\protect{\cite[Proof of Theorem 3]{Nikolayevsky}}]
\label{prop:diagonalpartofderivation}
Let $\g$ be a Lie algebra with a nice basis $e_1,\dotsc, e_n$. Then every derivation of $\g$ splits as the sum of a diagonal derivation $\sum\lambda_ie^i\otimes e_i$ and a derivation with zeroes on the diagonal, $\sum_{i\neq j} a_{ij}e^i\otimes e_j$.
\end{proposition}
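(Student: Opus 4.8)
The plan is to reduce the statement to a single numerical condition on the diagonal entries, and then to show that this condition is forced by the derivation identity together with the incidence properties of the nice diagram. Since $\Der\g$ is a linear subspace of $\End\g$, it suffices to prove that the diagonal part $D_0=\sum_i\lambda_i\, e^i\otimes e_i$ of a derivation $D$ is again a derivation: the off-diagonal part $\sum_{i\neq j}a_{ij}\,e^i\otimes e_j=D-D_0$ is then automatically a derivation, being the difference of two derivations, and the two summands have exactly the required shape.

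First I would record when $D_0$ is a derivation. Writing $De_i=\sum_p a_{ip}e_p$ with $\lambda_i=a_{ii}$, the operator $D_0$ acts by $D_0 e_i=\lambda_i e_i$, so the Leibniz identity for $D_0$ on a nonzero bracket $[e_i,e_j]=c_{ijk}e_k$ reduces to $\lambda_k=\lambda_i+\lambda_j$ (the identity being trivial when $[e_i,e_j]=0$). Thus the whole proposition comes down to proving that $\lambda_k=\lambda_i+\lambda_j$ whenever $c_{ijk}\neq0$.

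Next I would extract this relation from the derivation identity for the full $D$. Applying $D[e_i,e_j]=[De_i,e_j]+[e_i,De_j]$ to the arrow $i\xrightarrow{j}k$ and comparing the coefficients of $e_k$ on the two sides gives, after isolating the diagonal contributions coming from $r=i$ and $s=j$,
\[
c_{ijk}(\lambda_k-\lambda_i-\lambda_j)=\sum_{r\neq i}a_{ir}c_{rjk}+\sum_{s\neq j}a_{js}c_{isk}.
\]
The right-hand side collects precisely those off-diagonal entries of $D$ that can feed into the $e_k$-component of the equation.

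The crux, and the only place where niceness is used, is to show that this right-hand side vanishes. Consider a putative nonzero term $a_{ir}c_{rjk}$ in the first sum: it requires $c_{rjk}\neq0$ with $r\neq i$, i.e. an arrow $r\xrightarrow{j}k$ alongside the given arrow $i\xrightarrow{j}k$. By antisymmetry of the bracket these produce the two arrows $j\xrightarrow{r}k$ and $j\xrightarrow{i}k$, which share the source $j$ and the target $k$ but carry the distinct labels $r\neq i$; this contradicts the fact that for fixed source and target there is at most one arrow $j\xrightarrow{\bullet}k$. The same reasoning disposes of the second sum: a nonzero $a_{js}c_{isk}$ would yield arrows $i\xrightarrow{s}k$ and $i\xrightarrow{j}k$ with $s\neq j$, again two arrows $i\xrightarrow{\bullet}k$ with the same source and target. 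Hence both sums are empty, so $c_{ijk}\neq0$ forces $\lambda_k=\lambda_i+\lambda_j$, and the proposition follows. I expect this combinatorial vanishing to be the substantive step, the reduction and the coefficient comparison being routine; notably the argument uses only the defining incidence properties of a nice diagram, and neither the surjectivity of the root matrix nor any metric data.
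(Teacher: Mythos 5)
Your proof is correct. The paper itself gives no argument for this proposition (it simply cites Nikolayevsky's proof of Theorem~3), and your argument---reducing to showing the diagonal part is a derivation, comparing the $e_k$-coefficient of the Leibniz identity for an arrow $i\xrightarrow{j}k$, and using the ``at most one arrow $i\xrightarrow{\bullet}k$'' property of a nice basis (together with the symmetry $i\xrightarrow{j}k \Rightarrow j\xrightarrow{i}k$) to kill the off-diagonal contributions---is essentially the computation carried out in that reference, so your approach matches the source.
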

Moreover, diagonal derivations can be computed solely in terms of the nice diagrams. Given a vector $v\in\R^n$, we will denote by $v^\diagonal$ the diagonal $n\times n$ matrix with entries determined by $v$. On a nice Lie algebra $\g$, identified with $\R^n$ by fixing a nice basis, one then has:
\begin{equation}
\label{eqn:diagonalderivation}
v^\diagonal\in\Der\g\iff v\in\ker M_\Delta.
\end{equation}

The second and most important feature of nice Lie algebras is that any diagonal metric has diagonal Ricci operator. The problem of determining  a diagonal metric with prescribed Ricci tensor is then expressed by $n$ equations in $n$ unknowns; furthermore, it can be split into a linear and a polynomial problem. We will use the following formula:
\begin{proposition}[\protect{\cite[Theorem 2.3]{Conti_Rossi_2020}}]
\label{prop:Ric_M_Delta}
If $\g$ is a nice Lie algebra with nice diagram $\Delta$ and structure constants $\{c_I\}$, then the Ricci operator of the diagonal metric $g^\diagonal=g_1e^1\otimes e^1+\dotsc + g_ne^n\otimes e^n$ is
\begin{equation}
\label{eqn:Ric_M_Delta}
\Ric=\frac12(\tran M_\Delta X)^\diagonal, \qquad \left(\frac{x_{I}}{c_{I}^2}\right)=e^{M_\Delta}(g).
\end{equation}
\end{proposition}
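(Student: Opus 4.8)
The plan is to start from the standard expression for the Ricci operator of a left-invariant pseudo-Riemannian metric on a Lie group and then exploit the two defining features of the nice setup: that each bracket is a multiple of a single basis vector, and that the metric is diagonal. First I would pass to the orthonormal basis $\hat e_i = e_i/\sqrt{|g_i|}$ adapted to $g^\diagonal$, so that $\langle \hat e_i, \hat e_j\rangle = \eps_i\delta_{ij}$ with $\eps_i=\pm1$, and record that in this frame the structure constants rescale to $\hat c_{ijk}^2 = c_{ijk}^2\,|g_k|/(|g_i||g_j|)$, which up to sign is exactly the quantity $x_I = c_I^2\,(e^{M_\Delta}(g))_I$ appearing in the statement. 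Since $\g$ is nilpotent it is unimodular, so $\Tr(\ad X)=0$ for all $X$ and the mean-curvature contribution to the Ricci tensor drops out; what remains is a combination of a ``bracket-squared'' term, schematically $+\tfrac14\sum_{i,j}\langle[\hat e_i,\hat e_j],\cdot\rangle\langle[\hat e_i,\hat e_j],\cdot\rangle$, and a ``double-bracket'' term, schematically $-\tfrac12\sum_i\eps_i\langle[\,\cdot\,,\hat e_i],[\,\cdot\,,\hat e_i]\rangle$.

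The next step is to prove that $\Ric$ is diagonal in the basis $\{e_i\}$, which is where the nice hypothesis is essential. For an off-diagonal entry $\ric(e_a,e_b)$ with $a\neq b$, the bracket-squared term vanishes immediately: $[\hat e_i,\hat e_j]$ is a multiple of a single $\hat e_k$, so it cannot simultaneously pair with $e_a$ and $e_b$. For the double-bracket term, a nonzero contribution would require arrows $a\xrightarrow{i}k$ and $b\xrightarrow{i}k$ sharing the same label $i$ and target $k$; but the paired arrows $i\xrightarrow{a}k$ and $i\xrightarrow{b}k$ would then violate the nice condition that at most one arrow $i\xrightarrow{\bullet}k$ exists for fixed source $i$ and target $k$, forcing $a=b$. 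Hence both off-diagonal terms vanish and $\Ric = \lambda^\diagonal$ for some $\lambda\in\R^n$.

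It then remains to compute the diagonal entries and recognize them as $\tfrac12\,\tran M_\Delta X$. For a fixed arrow $i\xrightarrow{j}k$ with $[e_i,e_j]=c_{ijk}e_k$, the bracket-squared term contributes to $\ric(e_k,e_k)$ only (the target receives a $+$), while the double-bracket term contributes to $\ric(e_i,e_i)$ and $\ric(e_j,e_j)$ (each source receives a $-$); after the rescaling above, every contribution carries the factor reproducing $x_I = c_I^2\,g_k/(g_ig_j)$ up to sign. This incidence pattern---$+1$ at the target, $-1$ at each of the two sources---is precisely the pattern of entries in the row of $M_\Delta$ associated to the pair $i\xrightarrow{j}k$, $j\xrightarrow{i}k$, so collecting the contributions at each node $\ell$ gives $\lambda_\ell = \tfrac12(\tran M_\Delta X)_\ell$, which is the asserted formula.

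The hard part will not be the conceptual matching to $\tran M_\Delta$, which is forced once the sign pattern is identified, but the careful bookkeeping of constants and signs in the pseudo-Riemannian case: tracking the factors $\eps_i$ in the traces, checking that the factor $2$ from summing over both orderings $(i,j)$ and $(j,i)$ in the bracket-squared term combines with $\tfrac14$ to match the $\tfrac12$ coming from the double-bracket term at the sources, and confirming that the signs conspire so that the metric ratio appears as $g_k/(g_ig_j)$ rather than $|g_k|/(|g_i||g_j|)$. I would mitigate this by writing the Ricci operator (rather than the tensor) from the outset, so that one metric factor is automatically absorbed and the eigenvalues $\lambda_\ell$ emerge directly.
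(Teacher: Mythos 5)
The paper itself contains no proof of this proposition to compare against: it is imported verbatim from \cite[Theorem 2.3]{Conti_Rossi_2020}, so your blind attempt has to be judged on its own terms, and on those terms it is correct. The skeleton you propose is the natural one, and the delicate points you flag do work out. In the frame $\hat e_i=e_i/\sqrt{\abs{g_i}}$ one has $\hat c_{ijk}^2=\abs{x_I}$, and the sign question resolves itself because $\sign x_I=\sign\bigl(g_k/(g_ig_j)\bigr)=\eps_i\eps_j\eps_k$: passing to the Ricci \emph{operator} multiplies the diagonal entry at node $\ell$ by $\eps_\ell$, so the target contribution of a row $I$ carries $\eps_k\cdot\eps_i\eps_j\,\hat c_{ijk}^2=x_I$ and each source contribution carries $\eps_i\cdot\eps_j\eps_k\,\hat c_{ijk}^2=x_I$, i.e.\ every term is exactly $\pm\tfrac12x_I$ with its sign, not just its absolute value. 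Your factor bookkeeping is also right: the $\tfrac14$-term summed over ordered pairs gives $+\tfrac12x_I$ at the target, matching the $-\tfrac12x_I$ at each of the two sources, which is precisely the row pattern $(-1,-1,+1)$ of $M_\Delta$ and yields $\Ric=\tfrac12(\tran M_\Delta X)^\diagonal$; and the off-diagonal vanishing is correctly reduced to the nice condition (two arrows $i\xrightarrow{a}k$ and $i\xrightarrow{b}k$ with the same source and target force $a=b$). The only imprecision is your claim that unimodularity alone reduces the Ricci tensor to the two schematic terms: unimodularity kills the mean-curvature term, but the general formula for a left-invariant metric also contains the Killing-form term $-\tfrac12B(X,Y)$, which vanishes here for a different reason, namely that for nilpotent $\g$ every $\ad X$ is strictly triangular in a common (Engel) basis, so $\Tr(\ad X\circ\ad Y)=0$. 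With that one sentence added your argument is complete; it buys a self-contained verification where the paper buys brevity by citation.
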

Notice that the particular value of $g$ satisfying
$\left(\frac{x_{I}}{c_{I}^2}\right)=e^{M_\Delta}(g)$ is only relevant to establish the signature: if $g$ and $h$ are solutions of \eqref{eqn:Ric_M_Delta} that differ by a positive factor in each entry, say $g_i=t_ih_i$, then the map that rescales each $e_i$ by $\sqrt{t_i}$ is a Lie algebra isomorphism, i.e.  the metrics $g^\diagonal$ and $h^\diagonal$  effectively correspond to the same metric and Lie algebra written relative to different bases. To account for the signature, we will introduce the notation
\[\logsign x=\begin{cases}
1 & x<0 \\
0 & x>0
\end{cases},\qquad  \logsign X=(\logsign x_I)_I.\]
Equation~\eqref{eqn:Ric_M_Delta} shows that if $I$ represents the arrow $i\xrightarrow{j}k$, $x_{I}$ has the same sign as $g_k/(g_ig_j)$; in term of the $\operatorname{mod} 2$ reduction of the root matrix, denoted $M_{\Delta,2}$, we can write
\begin{equation}
\label{eqn:signature}
\logsign X=M_{\Delta,2}\logsign g.
\end{equation}

Finally, let us recall that an automorphism of the nice diagram is a permutation  $\sigma$ of its nodes that maps arrows to arrows, i.e. $\sigma_i\xrightarrow{\sigma_j}\sigma_k$ is an arrow whenever $i\xrightarrow jk$ is an arrow. It is natural to consider nice diagrams up to automorphisms, which corresponds to considering nice Lie algebras up to reordering of the basis. An explicit computation of automorphisms of a nice diagram is given below in Example~\ref{example:of_the_algorithm}.

\section{Constructing generalized nilsolitons}
\label{sec:constructing}
The aim of the section is to develop tools to construct solutions to the ``generalized nilsoliton'' equation~\eqref{eqn:generalizednilsoliton} on a nice Lie algebra $\g$. We consider the case $\Tr D\neq0$, which gives rise to nonunimodular solvmanifolds $\g\rtimes_D\R$. The  derivation $D$ will turn out to be diagonalizable, but not diagonal relative to the nice basis.

In the following construction, we consider derivations $D$ which are \emph{almost} diagonal, meaning that their nondiagonal entries are indexed by a set
\begin{equation}
\label{eqn:A}
\begin{split}
\mathcal{A}&=\{(i_1,j_1),\dotsc, (i_k,j_k)\},\\
& i_1,\dotsc, i_k,j_1,\dotsc, j_k \text{ pairwise distinct elements in } {1,\dotsc, n}.
\end{split}
\end{equation}

The next lemma will be the guide to determine what properties should be imposed on the set $\mathcal{A}$ and derivation $D$.
Without loss of generality, we will require $\Tr D=\Tr D^2$. Notice that this normalization affects the Einstein constant $-\Tr ((D^s)^2)$.
\begin{lemma}
\label{lemma:almostdiagonal}
Let $\g$ be a nice Lie algebra with a diagonal metric
\[g=g_1e^1\otimes e^1+\dotsc + g_ne^n\otimes e^n,\]
let $\mathcal{A}$ be as in \eqref{eqn:A}, and let $D$ be a derivation of the form
\[D=\lambda_1 e^1\otimes e_1 + \dotsc + \lambda_n e^n\otimes e_n + \sum_{(i,j)\in \mathcal{A}}a_{ji}e^i\otimes e_{j},\]
where the $a_{ji}$ are nonzero. Assume furthermore that $\Tr D=\Tr D^2\neq0$. Then~\eqref{eqn:generalizednilsoliton} holds if and only if for $(i,j)\in \mathcal A$
\[
e_i\hook de^j=0, \qquad \Tr D = \lambda_i-\lambda_j,\qquad
a_{ji}^2= 2 \frac{g_i}{g_j}  A_j^i\Tr D,
\]
where $A_j^i$ are constants such that
\begin{equation}
\label{eqn:generalizednik}
\Tr (D\mu^\diagonal)=\biggl(1+ \sum_{(i,j)\in \mathcal A}A_j^i\biggr)\Tr \mu^\diagonal+ \sum_{(i,j)\in \mathcal A}A_j^i (\mu_j-\mu_i), \quad \mu^\diagonal\in\Der\g,
\end{equation}
and
\begin{multline*}
\frac1{\Tr D}\Ric=-\biggl(1 + \sum_{(i,j)\in \mathcal A}A_j^i\biggr)\id
-\sum_{(i,j)\in \mathcal A}A_j^i(e^j\otimes e_j-e^i\otimes e_i)\\
+(\lambda_1 e^1\otimes e_1 + \dotsc + \lambda_n e^n\otimes e_n ).
\end{multline*}
\end{lemma}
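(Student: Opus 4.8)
The plan is to compute every ingredient of~\eqref{eqn:generalizednilsoliton} explicitly in the nice basis, exploiting that the pairs in $\mathcal A$ involve pairwise distinct indices, which makes the off-diagonal part of $D$ an extremely sparse, square-zero matrix. First I would record the adjoint of the elementary endomorphisms with respect to the diagonal metric, $(e^i\otimes e_j)^*=\frac{g_j}{g_i}e^j\otimes e_i$, so that writing $D=L+N$ with $L=\sum_i\lambda_i e^i\otimes e_i$ and $N=\sum_{(i,j)\in\mathcal A}a_{ji}e^i\otimes e_j$ gives $D^*=L+\sum_{(i,j)\in\mathcal A}a_{ji}\frac{g_j}{g_i}e^j\otimes e_i$. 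Using $[L,e^i\otimes e_j]=(\lambda_j-\lambda_i)e^i\otimes e_j$ and the disjointness of the index sets (which forces $N^2=0$ and kills all the ``crossed'' products, so that $\Tr N^2=0$), I would find that $[D,D^*]$ has off-diagonal part supported on the $e^i\otimes e_j$ and $e^j\otimes e_i$ with coefficients proportional to $\lambda_i-\lambda_j$, and diagonal part $[N,N^*]=\sum_{(i,j)\in\mathcal A}a_{ji}^2\frac{g_j}{g_i}(e^j\otimes e_j-e^i\otimes e_i)$, while $\Tr((D^s)^2)=\sum_i\lambda_i^2+\frac12\sum_{(i,j)\in\mathcal A}a_{ji}^2\frac{g_j}{g_i}$.

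By Proposition~\ref{prop:Ric_M_Delta} the left-hand side $\Ric$ is diagonal, so the tensor equation in~\eqref{eqn:generalizednilsoliton} splits into its off-diagonal and diagonal components, and both directions of the ``if and only if'' can be read off the resulting identities. Collecting the off-diagonal terms of $-\frac12[D,D^*]+(\Tr D)D^s$, the coefficient of $e^i\otimes e_j$ is $\frac12 a_{ji}(\Tr D-\lambda_i+\lambda_j)$ and that of $e^j\otimes e_i$ is $\frac{g_j}{g_i}$ times the same scalar; since the $a_{ji}$ are nonzero, requiring these to vanish is exactly $\Tr D=\lambda_i-\lambda_j$ for every $(i,j)\in\mathcal A$, which is the second condition.

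For the diagonal part I would first use the normalization $\Tr D=\Tr D^2=\sum_i\lambda_i^2$ to rewrite $\Tr((D^s)^2)=\Tr D+\frac12\sum a_{ji}^2\frac{g_j}{g_i}$. Setting $A_j^i:=\frac1{2\Tr D}a_{ji}^2\frac{g_j}{g_i}$, which is precisely the relation $a_{ji}^2=2\frac{g_i}{g_j}A_j^i\Tr D$, and substituting the diagonal parts computed above, turns ``diagonal part of the right-hand side $=\Ric$'' into the asserted formula for $\frac1{\Tr D}\Ric$ after matching diagonal entries one index at a time; this is legitimate because the indices occurring in $\mathcal A$ are distinct, so each pair $(i,j)$ contributes to a distinct entry and yields its own equation. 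It then remains to identify these $A_j^i$ with the constants defined intrinsically through~\eqref{eqn:generalizednik}. Here I would use the identity $\Tr(\Ric\,\mu^\diagonal)=\frac12\langle X,M_\Delta\mu\rangle$, which vanishes for every diagonal derivation $\mu^\diagonal$ by Proposition~\ref{prop:Ric_M_Delta} and~\eqref{eqn:diagonalderivation} since $\mu\in\ker M_\Delta$; pairing the asserted Ricci formula with $\mu^\diagonal$ and using $\Tr(D\mu^\diagonal)=\sum_i\lambda_i\mu_i$ then reproduces~\eqref{eqn:generalizednik} exactly, so the two descriptions of the $A_j^i$ agree.

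The remaining point, which I expect to be the crux, is the equivalence of the trace condition $\Tr(\ad v\circ D^*)=0$ with the vanishing $e_i\hook de^j=0$. Expanding in the nice basis, the diagonal part $L$ contributes nothing (there are no loops $m\xrightarrow{p}p$ by acyclicity), leaving $\Tr(\ad e_m\circ D^*)=\sum_{(i,j)\in\mathcal A}a_{ji}\frac{g_j}{g_i}c_{mij}$, where $c_{mij}$ is the structure constant of the arrow $m\xrightarrow{i}j$. The implication $e_i\hook de^j=0\Rightarrow$ (the trace vanishes) is immediate, but the converse is delicate: a priori the weighted sum could vanish through cancellation between different pairs sharing a label. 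To exclude this I would combine the nice-diagram axioms (for a fixed source and target the label is unique, so each pair $(i,j)$ admits at most one arrow $i\xrightarrow{\bullet}j$) with the derivation constraints already extracted, namely that $L\in\Der\g$ forces $\lambda_i+\lambda_m=\lambda_j$ on such an arrow, which together with $\Tr D=\lambda_i-\lambda_j$ pins down $\lambda_m=-\Tr D$ and severely restricts which labels can occur; the goal is to show that the induced assignment $(i,j)\mapsto m(i,j)$ is injective, so that the linear system forces each $c_{mij}$ to vanish. This combinatorial bookkeeping, woven together with the derivation property, rather than any of the curvature computations, is where the real work lies.
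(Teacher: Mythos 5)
Your first three paragraphs are, step for step, the paper's own proof: the same decomposition $D=L+N$, the same formulas for $D^*$, $[D,D^*]$, $D^s$ and $\Tr((D^s)^2)$, the same extraction of $\Tr D=\lambda_i-\lambda_j$ from the off-diagonal part, the same substitution $A_j^i=\frac{g_ja_{ji}^2}{2g_i\Tr D}$ in the diagonal part, and the same identification of the $A_j^i$ via $\Tr(\Ric\circ\mu^\diagonal)=0$ (which the paper asserts without comment and you correctly justify through Proposition~\ref{prop:Ric_M_Delta} and~\eqref{eqn:diagonalderivation}). All of this is correct.

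The gap is exactly the step you flag as the crux, and it cannot be closed, because the implication you are trying to prove there is false. Your plan is to show that the assignment sending $(i,j)\in\mathcal A$ to the source $m(i,j)$ of the unique arrow $m\xrightarrow{i}j$ is injective; but the only constraint you can extract from the derivation property, namely $\lambda_{m(i,j)}=-\Tr D$, is the \emph{same} equation for every pair, hence perfectly consistent with two pairs sharing their source, and no nice-diagram axiom forbids that: niceness gives uniqueness of an arrow given (source, target), (source, label) or (label, target), never given the source alone. Concretely, on $\g=(0,0,0,e^{12},e^{13})$ take $\mathcal A=\{(2,4),(3,5)\}$: the arrows $1\xrightarrow{2}4$ and $1\xrightarrow{3}5$ both have source $1$, so $e_2\hook de^4$ and $e_3\hook de^5$ are nonzero multiples of the same covector $e^1$, and cancellation in the trace condition can occur. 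Moreover, the rest of the equation does not exclude this: with
\[D=\bigl(-\tfrac13,\tfrac13,\tfrac13,0,0\bigr)^\diagonal+e^2\otimes e_4-e^3\otimes e_5,\qquad (g_1,\dotsc,g_5)=\bigl(-\tfrac53,1,1,-\tfrac5{18},-\tfrac5{18}\bigr),\]
one checks that $D$ is a derivation with $\Tr D=\Tr D^2=\tfrac13$ and $a_{42}=1$, $a_{53}=-1$, that the two off-diagonal contributions to $\Tr(\ad v\circ D^*)$ cancel for every $v$, that Proposition~\ref{prop:Ric_M_Delta} gives $\Ric=\bigl(-\tfrac16,-\tfrac1{12},-\tfrac1{12},\tfrac1{12},\tfrac1{12}\bigr)^\diagonal$, and that all of~\eqref{eqn:generalizednilsoliton} holds --- yet $e_2\hook de^4=-e^1\neq0$. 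So no amount of combinatorial bookkeeping can establish the ``only if'' part of the condition $e_i\hook de^j=0$.

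You should know that the paper's proof is no more successful at this point: it dismisses the step with the one-line claim that ``by the nice condition, the terms $e_i\hook de^j$ can only be linearly dependent if they are zero'', and the configuration above contradicts precisely that claim. In other words, you correctly located the only step that does not follow; the paper asserts it, you honestly left it open, and in fact it fails. What survives, and is all that Theorem~\ref{thm:almostdiagonalwithdiagrams} uses, is the ``if'' direction, in which $e_i\hook de^j=0$ is imposed as a hypothesis (this is how condition~\ref{cond:nondiagonal3} enters the definition of nondiagonal triple); alternatively, the lemma becomes correct if the condition $e_i\hook de^j=0$ for each $(i,j)$ is weakened to the vanishing of the single $1$-form $\sum_{(i,j)\in\mathcal A}\frac{g_j}{g_i}a_{ji}\,e_i\hook de^j$. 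Your proof of the ``if'' direction and of all the algebraic identities is fine as it stands.
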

\begin{proof}
We compute
\[D^*=\lambda_1 e^1\otimes e_1 + \dotsc + \lambda_n e^n\otimes e_n + \sum_{(i,j)\in \mathcal A}\frac{g_j}{g_i}a_{ji}e^j\otimes e_i.\]
Since $\g$ is nice, the diagonal part of $D$ is a derivation (see Proposition~\ref{prop:diagonalpartofderivation}). Therefore, $\Tr \ad v\circ D^*=0$ if and only if
\begin{multline*}
0=\Tr \biggl(\ad v\circ \sum_{(i,j)\in \mathcal A}\frac{g_j}{g_i}a_{ji}e^j\otimes e_i)\biggr)= \sum_{(i,j)\in A}\frac{g_j}{g_i}a_{ji}e^j([v,e_i])\\
= -v\hook \sum_{(i,j)\in \mathcal A}\frac{g_j}{g_i}a_{ji}(e_i\hook de^j), \quad v\in\g.
\end{multline*}
By the nice condition, the terms $e_i\hook de^j=0$ can only be linearly dependent if they are zero.

We also compute
\begin{gather*}
[D,D^*]=\sum_{(i,j)\in \mathcal A} \biggl(a_{ji}(\lambda_i-\lambda_j)\bigl(\frac{g_j}{g_i}e^j\otimes e_i+e^i\otimes e_j)+ \frac{g_j}{g_i}a_{ji}^2(e^j\otimes e_j-e^i\otimes e_i)\biggr),\\
D^s=\lambda_1 e^1\otimes e_1 + \dotsc + \lambda_n e^n\otimes e_n + \frac{1}{2} \sum_{(i,j)\in \mathcal A}a_{ji}(e^i\otimes e_j+ \frac{g_j}{g_i}e^j\otimes e_i),\\
\Tr ((D^s)^2)=\lambda_1^2+\dotsc + \lambda_n^2 + \frac12\sum_{(i,j)\in \mathcal A} a_{ji}^2\frac{g_j}{g_i}=\Tr (D^2)+ \frac12\sum_{(i,j)\in \mathcal A} a_{ji}^2\frac{g_j}{g_i}.
\end{gather*}
Since we assume $a_{ji}\neq0$, the offdiagonal part of~\eqref{eqn:generalizednilsoliton} is satisfied if and only if for $(i,j)\in \mathcal A$
\[-\frac12(\lambda_i-\lambda_j)\bigl(\frac{g_j}{g_i}e^j\otimes e_i+e^i\otimes e_j)+(\Tr D)( \frac{1}{2}e^i\otimes e_j+ \frac{g_j}{2g_i}e^j\otimes e_i)=0,\]
i.e.
\[\Tr D = \lambda_i-\lambda_j, \qquad (i,j)\in \mathcal A.\]
On the other hand, the diagonal part of~\eqref{eqn:generalizednilsoliton} gives
\begin{multline*}
\Ric=-\biggl(\sum_i\lambda_i^2 + \frac12\sum_{(i,j)\in\mathcal A} a_{ji}^2\frac{g_j}{g_i}\biggr)\id
-\sum_{(i,j)\in\mathcal A} \frac{g_j}{2g_i}a_{ji}^2(e^j\otimes e_j-e^i\otimes e_i)\\
+(\Tr D)\biggl(\sum_i\lambda_i e^i\otimes e_i\biggr).
\end{multline*}
Setting $A_j^i=\frac{g_j a_{ji}^2}{2g_i\Tr D}$ and dividing by $\Tr D$, we can write
\[\frac1{\Tr D}\Ric=-\biggl(\frac{\Tr D^2}{\Tr D} + \sum_{(i,j)\in\mathcal A} A_j^i\biggr)\id
-\sum_{(i,j)\in\mathcal A} A_j^i(e^j\otimes e_j-e^i\otimes e_i)+\sum_i\lambda_i e^i\otimes e_i.\]
If $\mu^\diagonal$ is a diagonal derivation, since $\Tr(\Ric\circ \mu^\diagonal)=0$, we compute
\[0=-\biggl(1+\sum_{(i,j)\in \mathcal{A}} A_j^i\biggr)\Tr \mu^\diagonal-\sum_{(i,j)\in \mathcal{A}} A_j^i(\mu_j-\mu_i)+\Tr D\mu^\diagonal.\qedhere\]
\end{proof}
\begin{remark}
A derivation $D$ satisfying the conditions of Lemma~\ref{lemma:almostdiagonal} must necessarily be diagonalizable. Indeed, suppose $D$ is as in Lemma~\ref{lemma:almostdiagonal}. Up to reordering indices, we can assume that $D$ is upper triangular; in particular, the diagonal elements $\lambda_i$ are the eigenvalues.
If $\lambda=\lambda_j$ is an eigenvalue, then the $j$-th row of $D-\lambda I$ consists of zeroes except possibly for the $i$-th entry, if $(i,j)\in\mathcal A$; in the latter case, however, $\lambda_i-\lambda_j=\Tr D\neq 0$, so the $i$-th row of $D-\lambda I$ is zero everywhere except at the $i$-th entry. This shows that the rank of $D-\lambda I$ coincides with the number of diagonal elements distinct from $\lambda$. Thus, $D$ is diagonalizable.
\end{remark}

\begin{remark}
A similar construction as in Lemma~\ref{lemma:almostdiagonal} could be in principle considered for $\Tr D=0$; in that case, $\g\rtimes_D\R$ would be unimodular, and the derivation not diagonalizable. We do not know whether this will produce new examples; we plan to study this in future work.
\end{remark}

\begin{example}
Consider the Lie algebra $\g=(0,0,e^{12},e^{13})$; this notation, inspired by~\cite{Salamon:ComplexStructures}, means that there is a fixed basis $e_1,\dotsc, e_4$ such that the dual basis $e^1,\dotsc, e^4$ satisfies
\[de^1=0=de^2, \qquad de^3=e^1\wedge e^2, \qquad de^4=e^1\wedge e^3.\]
The generic diagonal derivation is
\[(-a+b,2a-b,a,b)^\diagonal.\]
We consider $\mathcal A=\{(1,2)\}$, i.e. the derivation
\[D=(-a+b,2a-b,a,b)^\diagonal+a_{21}e^1\otimes e_2.\]
The condition $\Tr D=2a+b=\lambda_1-\lambda_2$ gives
\[2a+b=-a+b-(2a-b);\]
together with the equations~\eqref{eqn:generalizednik},
this gives a linear system in $a,b,A_2^1$ with solution
\[a= \frac{7}{51},\qquad A_2^1= -\frac{11}{17},\qquad b = \frac{35}{51},\]
i.e.
\[D=\left(\begin{array}{cccc}\frac{28}{51}&0&0&0\\a_{21}&-\frac{7}{17}&0&0\\0&0&\frac{7}{51}&0\\0&0&0&\frac{35}{51}\end{array}\right).\]
In addition, we have
\[{a_{21}^{2}=-\frac{1078}{867} \frac{g_1}{g_2}}.\]
By~\eqref{eqn:Ric_M_Delta}, we must solve
\[\frac1{2\Tr D}\tran M_\Delta(X)=\left(-\frac{23}{51},-\frac{2}{17},-\frac{11}{51},\frac{1}{3}\right),\]
which gives
\[X=\left(\frac{196}{867},\frac{98}{153}\right).\]
Now we must solve $e^{M_\Delta}(g)=X$, i.e.
\[\frac{g_3}{g_1g_2}=\frac{196}{867}, \qquad \frac{g_4}{g_1g_3}=\frac{98}{153}.\]
A particular solution is given by
\[g_1=3,\quad g_2=-22,\quad g_3=-\frac{4312}{289},\quad g_4=-\frac{422576}{14739},\quad a_{21}=\frac7{17}.\]
Therefore we see that the metric
\[3e^1\otimes e^1-22e^2\otimes e^2-\frac{4312}{289}e^3\otimes e^3-\frac{422576}{14739}e^4\otimes e^4+e^5\otimes e^5\]
on the Lie algebra $\tilde{\g}=\g\rtimes_D\Span{e_5}$
\[\left(\frac{28}{51}e^{15},-\frac7{17}e^{25}+\frac7{17}e^{15},\frac7{51}e^{35}+e^{12},\frac{35}{51}e^{45}+e^{13},0\right)\]
is Einstein with
\[\Ric=-\frac{98}{289}\id.\]
\end{example}

In Lemma~\ref{lemma:almostdiagonal}, the structure constants are fixed, the parameters $\{a_{ji}\}$ must satisfy linear conditions that make $D$ a derivation, and the $A_j^i$ linear conditions that follow from $\Tr(\Ric\circ D)=0$, but their relation to the metric is nonlinear. We now take a differerent point of view: we do not fix the structure constants, but only the nice diagram. The $A_j^i$ and the diagonal part of the derivations are determined linearly, and then we impose conditions on the structure constants so that the Ricci operator takes the required form and $D$ is a derivation. Notice that since we allow the structure constants to vary, we may assume that the nice basis is orthonormal, i.e. the $g_i$ equal $\pm1$. This leads to the following definition.

Let $\Delta$ be a nice diagram with $n$ nodes. We will identify nodes with numbers $\{1,\dotsc, n\}$. Let  $\mathcal A$ be a nonempty subset of $N(\Delta)\times N(\Delta)$. Let $D\colon\R^n\to\R^n$ be a linear map, $A\colon \mathcal A\to\R^*$ a function; we will write $A_j^i$ for $A(i,j)$.
We say that  $(D,A,\mathcal{A})$ is a \emph{nondiagonal triple} if the following conditions hold:
\begin{enumerate}
[label=(N\arabic*)]
\item\label{cond:nondiagonal1}
whenever $(i,j)$ is in $\mathcal A$, $i\neq j$;
\item\label{cond:nondiagonal2}
whenever $(i,j),(i',j')$ are distinct elements of $\mathcal A$, then the elements $i,j,i',j'$ are pairwise distinct;
\item\label{cond:nondiagonal3} if $(i,j)$ is in $\mathcal A$, then there is no arrow $i\xrightarrow{k}j$;
\item\label{cond:nondiagonal4}
denoting by $(\lambda_1,\dotsc, \lambda_n)$ the diagonal elements of $D$, for every
$\mu\in\ker M_\Delta$,
\[\sum_i \lambda_i\mu_i=\biggl(1+ \sum_{(i,j)\in \mathcal A}A_j^i\biggr)\sum_i \mu_i+ \sum_{(i,j)\in \mathcal A}A_j^i  (\mu_j-\mu_i);\]
\item\label{cond:nondiagonal5}
$\lambda_1+\dotsc + \lambda_n=\lambda_i-\lambda_j$ whenever $(i,j)\in \mathcal A$;
\item\label{cond:nondiagonal6}
$D$ takes the form
\[D=\lambda_1 e^1\otimes e_1 + \dotsc + \lambda_n e^n\otimes e_n + \sum_{(i,j)\in \mathcal A}a_{ji}e^i\otimes e_{j},\]
where $a_{ji}^2=\abs{2A_j^i\Tr D}\neq0$ and $(\lambda_1,\dotsc,\lambda_n)$ is in $\ker M_\Delta$.
\end{enumerate}

\begin{remark}
\label{remark:A_ij_determined}
Conditions~\ref{cond:nondiagonal4} and~\ref{cond:nondiagonal5} always determine the $\lambda_i$ completely, but not necessarily the $A_j^i$. Indeed, let $f_{ij}(\lambda)=\lambda_1+\dots + \lambda_n-(\lambda_i-\lambda_j)$ and let
\[V=\{\lambda\in\ker M_\Delta \st f_{ij}(\lambda)=0\},\qquad\dim V=n_5\]
be the space of diagonal derivations satisfying~\ref{cond:nondiagonal5}.
Then, for $\lambda$ in $V$, imposing~\ref{cond:nondiagonal4} for any $\mu$ in $V$ yields $n_5$ independent equations
\[\sum_i \lambda_i\mu_i=\sum_i \mu_i,\]
thus completely determining $\lambda\in\ker M_\Delta$.

On the other hand, consider $W\subset\ker M_\Delta$ such that $V\oplus W=\ker M_\Delta$. Imposing~\ref{cond:nondiagonal4} for $\mu$ in $W$ gives an equation of the form
\begin{equation}
\label{eqn:N4onW}
\sum_{i,j}f_{ij}(\mu)A_j^i=\Tr(\lambda \mu) -\Tr(\mu).
\end{equation}
The matrix of this linear system in the unknowns $A_j^i$ has columns $f_{ij}(\mu)$, where each generator $\mu$ of $W$ determines a row.

The rows are independent because $W$ intersects $V$ trivially. Hence, the system always admits a solution, which is unique precisely when $\dim W=\abs{\mathcal A}$, i.e.
\[n_5=\dim\ker M_\Delta-\abs{\mathcal A}.\]
This holds if and only if condition~\ref{cond:nondiagonal5} imposes exactly $\abs{\mathcal A}$ linearly independent equations.
\end{remark}

\begin{remark}
Any Lie algebra admits a semisimple derivation $N$ satisfying $\Tr Nf=\Tr f$ for every derivation $f$, unique up to automorphisms, known as a \emph{Nikolayevsky derivation}, or \emph{pre-Einstein derivation}; it is known that for Riemannian solutions of~\eqref{eqn:nilsoliton} one must have $D=N$ up to multiples and automorphisms (see~\cite[Theorem 1]{Nikolayevsky}). If $\mathcal{A}$ is empty, condition~\ref{cond:nondiagonal4} implies that $D$ is the Nikolayevsky derivation. In general, however, the Nikolayevsky derivation will not satisfy~\ref{cond:nondiagonal5} (for instance, when its eigenvalues are positive, as is the case for Riemannian nilsolitons). In addition, $D$ may only equal the Nikolayevsky derivation if the linear equations~\eqref{eqn:N4onW} are homogeneous. A nontrivial solution in $A$ only exists if the columns $f_{ij}(\mu)$ are linearly dependent, i.e. $\abs{\mathcal{A}}>\dim W$, which means that the equations of condition~\ref{cond:nondiagonal5} are linearly dependent. In this paper we will focus on Lie algebras of dimension $\leq 5$, for which the Nikolayevsky derivation has positive eigenvalues, and those of higher dimension for which the equations of condition~\ref{cond:nondiagonal5} are independent. Therefore, none of the metrics we construct have $D=N$ up to a multiple.
\end{remark}

In the next theorem, we restate the construction of Lemma~\ref{lemma:almostdiagonal} using nice diagrams and nondiagonal triples. We will use the notation $[x]$ to represent the vector all of whose entries equal $x$ in $\R^n$, where $n$ is to be deduced from the context.
\begin{theorem}
\label{thm:almostdiagonalwithdiagrams}
Let $\Delta$ be a nice diagram. Let $(\mathcal{A},D,A)$ be a nondiagonal triple. Let $X$ be a vector such that
\begin{equation}
\label{eqn:generalizednilsolitonX}
\frac1{2\Tr D}(\tran M_\Delta X) = \biggl[-1-\sum_{(i,j)\in \mathcal{A}} A_j^i\biggr]+\sum_{(i,j)\in \mathcal{A}} A_j^i(e_i-e_j)+(\lambda_1,\dotsc, \lambda_n).
\end{equation}
Suppose $\epsilon\in\{\pm1\}^n$ satisfies
\[M_{\Delta,2}(\logsign \epsilon)=\logsign X,\qquad \epsilon_i/\epsilon_j = \sign (A_j^i\Tr D), \quad (i,j)\in \mathcal A.\]
Suppose $\g$ is a nice Lie algebra with diagram $\Delta$ such that the structure constants satisfy $c_I^2=\abs{x_I}$ and $D$ is a derivation. Then the diagonal metric $\epsilon^\diagonal$
satisfies~\eqref{eqn:generalizednilsoliton}.
\end{theorem}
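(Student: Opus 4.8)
The plan is to recognize that this theorem is merely a reformulation of Lemma~\ref{lemma:almostdiagonal} in which the metric is normalized to be orthonormal ($g_i=\epsilon_i=\pm1$) while the structure constants are left free; so the strategy is to verify that the hypotheses of that lemma are met by the data $(\g,\epsilon^\diagonal,D,\mathcal A)$ and then invoke it. Since $D$ is assumed to be a derivation of the nice form prescribed by~\ref{cond:nondiagonal6}, and $\mathcal A$ satisfies the pairwise-distinctness requirement of~\eqref{eqn:A} by~\ref{cond:nondiagonal1} and~\ref{cond:nondiagonal2}, the shape hypotheses of Lemma~\ref{lemma:almostdiagonal} hold; what remains is to check the normalization $\Tr D=\Tr D^2\neq0$, the three scalar conditions indexed by $\mathcal A$, the linear identity~\eqref{eqn:generalizednik}, and finally the Ricci formula.

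First I would dispatch the normalization. Because the off-diagonal entries of $D$ occur in pairwise distinct rows and columns (by~\ref{cond:nondiagonal2}), the matrix $D^2$ receives no diagonal contribution from them, so $\Tr D=\sum_i\lambda_i$ and $\Tr D^2=\sum_i\lambda_i^2$. Evaluating~\ref{cond:nondiagonal4} at $\mu=\lambda$ (legitimate since $\lambda\in\ker M_\Delta$ by~\ref{cond:nondiagonal6}) and substituting $\lambda_j-\lambda_i=-\Tr D$ from~\ref{cond:nondiagonal5} collapses the right-hand side to $\Tr D$, giving $\Tr D^2=\Tr D$; moreover $\Tr D\neq0$ because $a_{ji}^2=\abs{2A_j^i\Tr D}\neq0$ in~\ref{cond:nondiagonal6}. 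For the three scalar conditions: $e_i\hook de^j=0$ is exactly the absence of an arrow $i\xrightarrow{k}j$ guaranteed by~\ref{cond:nondiagonal3}; the equality $\Tr D=\lambda_i-\lambda_j$ is~\ref{cond:nondiagonal5}; and for the squared off-diagonal entry I would combine $a_{ji}^2=\abs{2A_j^i\Tr D}$ from~\ref{cond:nondiagonal6} with the sign hypothesis $\epsilon_i/\epsilon_j=\sign(A_j^i\Tr D)$ to rewrite $\abs{2A_j^i\Tr D}=2(\epsilon_i/\epsilon_j)A_j^i\Tr D=2(g_i/g_j)A_j^i\Tr D$, precisely the lemma's requirement. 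The identity~\eqref{eqn:generalizednik} is then~\ref{cond:nondiagonal4} verbatim, once diagonal derivations are identified with $\ker M_\Delta$ via~\eqref{eqn:diagonalderivation} and one notes that $\Tr(D\mu^\diagonal)=\sum_i\lambda_i\mu_i$ since the off-diagonal part of $D$ contributes nothing against a diagonal matrix.

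The heart of the argument is the Ricci formula, where Proposition~\ref{prop:Ric_M_Delta} and the signature data enter. By that proposition, the diagonal metric $\epsilon^\diagonal$ has $\Ric=\tfrac12(\tran M_\Delta X')^\diagonal$, where $X'$ is determined by $(x'_I/c_I^2)=e^{M_\Delta}(\epsilon)$; concretely, if $I$ is the arrow $i\xrightarrow{j}k$ then $x'_I=c_I^2\,\epsilon_k/(\epsilon_i\epsilon_j)$. I would then show $X'=X$: using $c_I^2=\abs{x_I}$, it suffices that $\epsilon_k/(\epsilon_i\epsilon_j)=\sign x_I$, and this is exactly the content of $M_{\Delta,2}(\logsign\epsilon)=\logsign X$, since, reading off the mod $2$ reduction as in~\eqref{eqn:signature}, the $I$-th entry of $M_{\Delta,2}\logsign\epsilon$ records $\logsign(\epsilon_k/(\epsilon_i\epsilon_j))$. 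With $X'=X$ in hand, comparing the diagonal operator $\tfrac12(\tran M_\Delta X)^\diagonal$ against the defining equation~\eqref{eqn:generalizednilsolitonX} shows that $\tfrac1{\Tr D}\Ric$ equals $-(1+\sum A_j^i)\id-\sum A_j^i(e^j\otimes e_j-e^i\otimes e_i)+\sum_i\lambda_i e^i\otimes e_i$, i.e. the Ricci formula of Lemma~\ref{lemma:almostdiagonal}. All hypotheses of the lemma being verified, $\epsilon^\diagonal$ satisfies~\eqref{eqn:generalizednilsoliton}.

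I expect the sign bookkeeping of the previous paragraph to be the only genuinely delicate point: one must keep track of $\logsign$, the reduction $M_{\Delta,2}$, and the two \emph{separate} sign constraints — one governing the signed structure constants $c_I^2=\abs{x_I}$, the other governing the off-diagonal entries $a_{ji}$ — and confirm that they are exactly what is needed to recover both $X'=X$ and $a_{ji}^2=2(g_i/g_j)A_j^i\Tr D$ with the correct sign. Everything else is a direct translation of conditions~\ref{cond:nondiagonal1}--\ref{cond:nondiagonal6} into the hypotheses of Lemma~\ref{lemma:almostdiagonal}.
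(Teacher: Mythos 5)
Your proposal is correct and takes essentially the same route as the paper's own proof: verify that the hypotheses of Lemma~\ref{lemma:almostdiagonal} hold for the metric $\epsilon^\diagonal$, then use Proposition~\ref{prop:Ric_M_Delta} to reduce the Ricci equation to the signature condition $M_{\Delta,2}(\logsign \epsilon)=\logsign X$. The only difference is one of detail: the paper simply asserts that the lemma's hypotheses are satisfied, whereas you spell out the verification (in particular deriving $\Tr D=\Tr D^2\neq0$ from~\ref{cond:nondiagonal4} and~\ref{cond:nondiagonal5}, and recovering the signed condition $a_{ji}^2=2(g_i/g_j)A_j^i\Tr D$ from the hypothesis $\epsilon_i/\epsilon_j=\sign(A_j^i\Tr D)$), which is a faithful expansion of the same argument.
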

\begin{proof}
The hypotheses of Lemma~\ref{lemma:almostdiagonal} are satisfied. Thus, the Ricci operator is diagonal and~\eqref{eqn:generalizednilsoliton} is equivalent to
\[\frac1{\Tr D}\Ric = \biggl[-1-\sum_{(i,j)\in \mathcal{A}} A_j^i\biggr]^\diagonal+\sum_{(i,j)\in \mathcal{A}} A_j^i(e_i-e_j)^\diagonal+(\lambda_1,\dotsc, \lambda_n)^\diagonal.\]
By Proposition~\ref{prop:Ric_M_Delta}, we must solve
\[e^{M_\Delta}(\epsilon)=\left(\frac{x_I}{c^2_I}\right)=\bigl(\sign x_I\bigr).\]
Taking log signs, this boils down to
\[M_{\Delta,2}(\logsign \epsilon)=\logsign X.\qedhere\]
\end{proof}
Every metric obtained with Theorem~\ref{thm:almostdiagonalwithdiagrams} determines a standard Einstein solvmanifold which is not of pseudo-Iwasawa type by applying Theorem~\ref{thm:extendgeneralizednilsoliton}. We will illustrate this concretely in one example.
\begin{example}
Consider the diagram with four nodes and arrows $1\xrightarrow{2}3, 1\xrightarrow{3}4$. Then we have a nondiagonal triple given by
\begin{gather*}
\mathcal{A}=\{(1,2)\}, \qquad
D=\left(\begin{array}{cccc}\frac{28}{51}&0&0&0\\\pm \frac{7}{51}  \sqrt{3} \sqrt{22}&-\frac{7}{17}&0&0\\0&0&\frac{7}{51}&0\\0&0&0&\frac{35}{51}\end{array}\right), \qquad A_2^1=-\frac{11}{17}.
\end{gather*}
A solution of~\eqref{eqn:generalizednilsolitonX} is given by
\[X=\left(\frac{196}{867},\frac{98}{153}\right),\]
giving rise to the Lie algebra
\[\left(0,0,\frac{14}{867}  \sqrt{867} e^{12}, \sqrt{\frac{98}{153}} e^{13}\right).\]
It is easy to check that $D$ is always a derivation. We have two choices of $\epsilon$ that satisfy the conditions of Theorem~\ref{thm:almostdiagonalwithdiagrams}, namely
\[\epsilon =(-1,1,-1,1),\qquad  \epsilon=(1,-1,-1,-1).\]
The resulting $5$-dimensional solvable Lie algebra is
\[\left(\frac{28}{51} e^{15}+\frac{7}{51} \sqrt{3} \sqrt{22} e^{25},-\frac{7}{17}  e^{25},\frac{7}{51} e^{35}+\frac{14}{867} \sqrt{867} e^{12},\frac{35}{51} e^{45}+ \sqrt{\frac{98}{153}} e^{13},0\right).\]
\end{example}

It will be convenient to give the following definition. Given a nondiagonal triple $(\mathcal A, A,D)$, a \emph{nondiagonal solution} is a pair $(\{c_I\},\epsilon)$ such that the conditions of Theorem~\ref{thm:almostdiagonalwithdiagrams} hold for some $X$.
Notice that  $X$  is uniquely determined by the data.

A nondiagonal solution determines an Einstein solvmanifold applying Theorem~\ref{thm:almostdiagonalwithdiagrams} and Theorem~\ref{thm:extendgeneralizednilsoliton}. We conclude this section by discussing when two Einstein solvmanifolds obtained in this way should be regarded as equivalent.

In general, identifying whether two solvmanifolds are isometric as pseudo-Riemannian manifolds is a difficult problem. There is a straightforward sufficient condition that one can test, as explained in \cite[Theorem~5.6]{Azencott_Wilson_1976}, \cite[Proposition~1.1]{Conti_Rossi_Segnan_2023}. The observation is that if $D$ is replaced with a different derivation $D'$ that commutes with $D$ and such that $D-D'$ is skew-symmetric relative to the metric, the resulting extensions $\g\rtimes_{D'}\R$ and $\g\rtimes_D \R$ lead to  isometric pseudo-Riemannian manifolds. However, no two metrics obtained from Theorem~\ref{thm:almostdiagonalwithdiagrams} can be related in this way. Indeed, fix two nondiagonal triples $(\mathcal A,A,D)$,  $(\mathcal A',A',D')$. The form of the metric implies that the space of
skew-symmetric endomorphisms is spanned by
\[e^i\otimes e_j-\epsilon_i\epsilon_j e^j\otimes e_i, \quad i\neq j.\]
The only possibility in order to have a pair $(j,i)$ such that both $(i,j)$ and $(j,i)$ are nonzero entries of $D-D'$ is if $(i,j)\in\mathcal A$ and $(j,i)\in\mathcal A'$ or viceversa. On the other hand, $e^i\otimes e_j-\epsilon_i\epsilon_j e^j\otimes e_i$ will not commute with $D$ in this case, because $\lambda_i-\lambda_j=\Tr D\neq0$.

A finer notion of equivalence we can consider is identifying two extensions $\g\rtimes_D\R$ and $\g'\rtimes_{D'}\R$ if they are related by a Lie algebra isomorphism which is also an isometry. We observe that since in the construction $\Tr D\neq0$, $\g$ is the nilradical of $\g\rtimes_D\R$. Therefore, an isomorphism $\g\rtimes_D\R\to\g'\rtimes_{D'}\R$  would induce an isomorphism of the nilradicals $\g$ and $\g'$. Since the nilradicals are nice Lie algebras, we consider two nondiagonal solutions equivalent if denoting by $\g$ the nice Lie algebra determined by $\Delta$, $\{c_I\}$ and $g$ the metric defined by $\epsilon$, with $\g',g'$ defined similarly, there is an  equivalence of nice Lie algebras $\g\to\g'$ that maps $g$ to $g'$ and $D$ to $D'$.
\begin{lemma}
\label{lemma:equivalent}
Given nice diagrams $\Delta$, $\Delta'$, nondiagonal triples $(\mathcal A, A,D)$, $(\mathcal A', A',\allowbreak D')$, and nondiagonal solutions $(\{c_I\},\epsilon)$, $(\{c'_I\},\epsilon')$, the nondiagonal solutions are equivalent if and only if there is an isomorphism of nice diagrams $f\colon\Delta\to\Delta'$ and $\delta\in\{\pm1\}^n$ such that
\[\lambda_{f(i)}'=\lambda_{i}, \qquad a_{f(i)f(j)}'=\delta_i\delta_j a_{ij}, \qquad
c'_{f(i)f(j)f(k)}=\delta_i\delta_j\delta_kc_{ijk}, \qquad \epsilon'_{f(i)}=\epsilon_i.\]
\end{lemma}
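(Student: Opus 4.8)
The plan is to prove Lemma~\ref{lemma:equivalent} by translating the abstract notion of ``equivalence of nondiagonal solutions'' into the concrete combinatorial/numerical conditions stated, treating the two directions separately. Recall that by definition the solutions are equivalent when there is an equivalence of nice Lie algebras $\phi\colon\g\to\g'$ that maps the metric $g$ (defined by $\epsilon$) to $g'$ (defined by $\epsilon'$) and $D$ to $D'$. First I would recall the structure of such equivalences of nice Lie algebras: an isomorphism sending one nice basis to a rescaling of a permutation of the other nice basis. Concretely, any equivalence is determined by an automorphism/isomorphism $f$ of the underlying diagrams together with a rescaling of the basis vectors, so that $\phi(e_i)=t_i e'_{f(i)}$ for nonzero scalars $t_i$ and a diagram isomorphism $f\colon\Delta\to\Delta'$. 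This is the standard description of morphisms between nice Lie algebras, and I would invoke it as the combinatorial backbone.

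Next I would impose the three requirements one at a time. The isometry condition $\phi^*g'=g$ forces $t_i^2\epsilon'_{f(i)}=\epsilon_i$; since the metrics are orthonormal ($\epsilon_i,\epsilon'_{f(i)}\in\{\pm1\}$) this splits into a sign equation $\epsilon'_{f(i)}=\epsilon_i$ (after absorbing signs) and a magnitude equation $t_i^2=1$, reducing the rescaling to signs $t_i=\delta_i\in\{\pm1\}$. The condition that $\phi$ be a Lie algebra isomorphism, applied to the structure constants, yields $c'_{f(i)f(j)f(k)}=\frac{t_k}{t_it_j}c_{ijk}=\delta_i\delta_j\delta_k c_{ijk}$, using $\delta_i=\delta_i^{-1}$. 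Finally, the condition $\phi\circ D=D'\circ\phi$, read on the diagonal and off-diagonal entries separately in the respective nice bases, gives $\lambda'_{f(i)}=\lambda_i$ for the diagonal part and $a'_{f(i)f(j)}=\frac{t_i}{t_j}a_{ij}=\delta_i\delta_j a_{ij}$ for the off-diagonal part indexed by $\mathcal A$. Collecting these reproduces exactly the four stated equalities, giving the forward direction.

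For the converse, I would assume the four numerical conditions hold for some diagram isomorphism $f$ and sign vector $\delta$, and define $\phi\colon\g\to\g'$ by $\phi(e_i)=\delta_i e'_{f(i)}$. The task is then to verify that this $\phi$ is simultaneously a Lie algebra isomorphism, an isometry, and an intertwiner for $D$ and $D'$. Each verification is the reverse of the computation above: the structure-constant equation guarantees $\phi$ respects brackets (so it is a genuine equivalence of nice Lie algebras, using that $f$ maps the diagram of $\g$ onto that of $\g'$), the sign equation $\epsilon'_{f(i)}=\epsilon_i$ together with $\delta_i^2=1$ guarantees $\phi$ is an isometry, and the $\lambda$ and $a$ equations guarantee $\phi D=D'\phi$ on the diagonal and off-diagonal parts respectively. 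I should also check that $f$ carries $\mathcal A$ to $\mathcal A'$ compatibly, which is forced by the requirement that the off-diagonal support of $D$ (namely $\mathcal A$) be sent to that of $D'$ under the intertwining relation.

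The main obstacle I anticipate is not any single computation but the bookkeeping of signs and the precise description of which maps count as equivalences of nice Lie algebras: I must be careful that absorbing a sign into $\delta_i$ versus into the diagram isomorphism $f$ is done consistently, and that the off-diagonal entries $a_{ij}$ transform with the correct ratio $t_i/t_j$ rather than $t_j/t_i$ (since $a_{ji}e^i\otimes e_j$ has the covariant index $i$ and contravariant index $j$). A secondary subtlety is ensuring that an \emph{arbitrary} equivalence of nice Lie algebras really is of the assumed rescaled-permutation form; I would justify this by Proposition~\ref{prop:diagonalpartofderivation} and the rigidity of nice bases, possibly remarking that the nondiagonal part of $D$ being supported on $\mathcal A$ pins down the permutation $f$ uniquely once the diagonal eigenvalues are matched.
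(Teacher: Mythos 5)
Your proposal is correct and follows essentially the same route as the paper, whose proof is just a terse version of your argument: since equivalences of nice Lie algebras are by definition rescaled permutations of the nice bases (compatible with a diagram isomorphism), orthonormality of the metrics forces the rescalings $t_i$ to be signs $\delta_i$, and the four stated equalities then follow by reading off the transformation of $c_{ijk}$, $\epsilon$, and the diagonal and off-diagonal parts of $D$. Your only superfluous worry is the ``rigidity of nice bases'' step: the rescaled-permutation form is built into the notion of equivalence of nice Lie algebras used in the paper, so no appeal to Proposition~\ref{prop:diagonalpartofderivation} is needed.
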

\begin{proof}
Since the nice bases are assumed to be orthonormal, the isomorphism is essentially obtained by a permutation of the indices $\{1,\dotsc, n\}$, which corresponds to an isomorphism of the diagrams, preceded by sign flips.
\end{proof}
If we fix $\Delta$, $(\delta,f)$ as in Lemma~\ref{lemma:equivalent} is an element of the group $\Z_2^n\rtimes\Aut(\Delta)$, acting on the set of nondiagonal triples. Equivalence of nondiagonal triples amounts to being in the same orbit for this action.

\section{Algorithm, implementation and results}
The discussion of Section~\ref{sec:constructing} leads naturally to an algorithm to classify nondiagonal solutions up to equivalence, which we give explicitly in Algorithm~\ref{alg:classification}. The algorithm reflects the construction in a straightforward way; the only subtlety is that for efficiency the quotient under the action of $\Z_2^n\rtimes\Aut(\Delta)$ is taken in two steps: in the outer iteration through the possible index sets $\mathcal{A}$, only one index set is taken in each orbit for the natural action of $\Aut(\Delta)$, and at the end of the iteration, the resulting nondiagonal triples are factored by the action of $\Z_2^n\rtimes(\Aut \Delta)^{\mathcal A}$, where $(\Aut \Delta)^{\mathcal A}$  indicates the stabilizer of ${\mathcal A}$.
\begin{algorithm}
\SetKwInOut{Input}{input}
\SetKwInOut{Output}{output}
\caption{\label{alg:classification}Classification of nondiagonal solutions up to equivalence}
\Input{The dimension $n$}
\Output{Nondiagonal solutions up to equivalence on nice Lie algebras of dimension $n$}
\For{$\Delta$ nice diagram with $n$ nodes (one in each isomorphism class)}{
\For{$\mathcal{A}\subset N(\Delta)\times N(\Delta)$ nonempty and satisfying \ref{cond:nondiagonal1}--\ref{cond:nondiagonal3} (one in each $\Aut(\Delta)$-orbit)}{
$L \leftarrow \emptyset$\\
Compute $(\lambda_1,\dotsc, \lambda_n)\in\ker M_\Delta$ and $A\colon\mathcal A\to\R^*$ such that~\ref{cond:nondiagonal4} and~\ref{cond:nondiagonal5} hold.\\
Compute $X$ so that~\eqref{eqn:generalizednilsolitonX} holds.\\
\For{$\{c_I\}$ with $c_I^2=\abs{x_I}$ and $D$  as in~\ref{cond:nondiagonal6} such that the Jacobi identity holds and $D$ is a derivation}{
\For{$\epsilon$ such that Theorem~\ref{thm:almostdiagonalwithdiagrams} holds}
{add $(\mathcal A,A,D,\{c_I\},\epsilon)$ to $L$}
}
Add a section of $L/(\Z_2^n\rtimes\Aut(\Delta)^\mathcal A)$ to output
}
}
\end{algorithm}

\begin{example}
\label{example:of_the_algorithm}
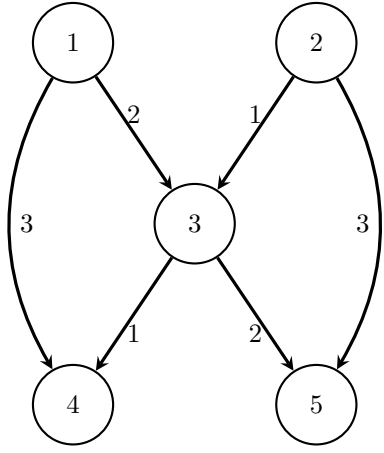
\begin{figure}[ht]
{\centering
\begin{tikzpicture}[scale=0.8]
\begin{scope}[every node/.style={circle,thick,draw}, minimum size=3em]
    \node (1) at (-2,3) {$1$};
    \node (2) at (2,3) {$2$};
    \node (3) at (0,0) {$3$};
    \node (4) at (-2,-3) {$4$};
    \node (5) at (2,-3) {$5$};
\end{scope}
\begin{scope}[>={stealth[black]}, every edge/.style={draw,very thick}]
    \path [->] (1) edge node [above]{$2$} (3);
    \path [->] (2) edge node [above]{$1$} (3);
    \path [->] (1) edge [bend right=30, looseness=1] node [right]{$3$} (4);
    \path [->] (2) edge [bend left =30, looseness=1] node [left]{$3$} (5);
    \path [->] (3) edge node [below] {$1$} (4);
    \path [->] (3) edge node [below]{$2$} (5);
\end{scope}
\end{tikzpicture}
\caption{\label{Fig.Diagram1}Diagram of $(0,0,e^{12},e^{13},e^{23})$}
}
\end{figure}
Consider the nice diagram $\Delta$ with $5$ nodes and arrows $1\xrightarrow{2}3$, $1\xrightarrow{2}4$, $1\xrightarrow{3}5$ (see Figure~\ref{Fig.Diagram1}). The only nontrivial automorphism of $\Delta$ is the involution $(1\,2)(4\,5)$. The sets $\mathcal{A}$ that satisfy \ref{cond:nondiagonal1}--\ref{cond:nondiagonal3} are reduced using the automorphism of $\Delta$, so $(1,2)$ is in the same orbit as $(2,1)$; similarly, $(5,3)$ is in the same orbit as $(4,3)$. Since $D=d_{ij}e_i\otimes e^j$ in the nondiagonal triple $(\mathcal A, A,D)$ is a derivation of a Lie algebra $\g$ with diagram $\Delta$, and since the generic derivation of the Lie algebra $(0,0,c_{123} e^{12},c_{134} e^{13}, c_{235} e^{23})$ is given by
\[
\begin{pmatrix}
d_{33}-d_{22} & d_{12} & 0 & 0 & 0 \\
d_{21} & d_{22} & 0 & 0 & 0 \\
d_{31} & d_{32} & d_{33} & 0 & 0 \\
d_{41} & d_{42} & \frac{c_{134}}{c_{123}} d_{32} & 2 d_{33}-d_{22} & \frac{c_{134}}{c_{235}} d_{12} \\
d_{51} & d_{52} & -\frac{c_{235}}{c_{123}} d_{31} & \frac{c_{235}}{c_{134}} d_{21} & d_{22}+d_{33} \\
\end{pmatrix},\]
the elements of $D$ in position $(2,1)$ and $(5,4)$ are simultaneously zero or nonzero, and similarly for: $(1,2)$ and $(4,5)$; $(3,2)$ and $(4,3)$; $(3,1)$ and $(5,3)$. In addition, $d_{42}$ and $d_{51}$ are allowed to be nonzero; the remaining nondiagonal elements vanish.
Taking all into account, the following are the allowed $\mathcal{A}$ up to the action of $\Aut(\Delta)$:
\[
\mathcal{A}_1=\{(1,5)\},\qquad\mathcal{A}_2=\{(1,5),(2,4)\},\qquad\mathcal{A}_3=\{(1,2),(4,5)\}.
\]
We see that
\[M_{\Delta}=
\begin{pmatrix}
-1 & -1 & 1 & 0 & 0 \\
-1 & 0 & -1 & 1 & 0 \\
0 & -1 & -1 & 0 & 1 \\
\end{pmatrix},\]
so $\ker M_{\Delta}=\{(\lambda_3-\lambda_2,\lambda_2,\lambda_3,2\lambda_3-\lambda_2,\lambda_2+\lambda_3) \st \lambda_2,\lambda_3\in\R\}$.
Conditions \ref{cond:nondiagonal4}--\ref{cond:nondiagonal5} give the following constraints for each case:
\begin{description}[style=unboxed,leftmargin=0cm]
\item[{Case $\mathcal{A}_1=\{(1,5)\}.$}]
\[
\begin{cases*}
2 \lambda_2+5 A_5^1+5=7 \lambda_3\\
2 \lambda_2=\lambda_3+A_5^1\\
5 \lambda_3=-2 \lambda_2
\end{cases*},
\]
with solution $A_5^1=-\frac{5}{7}$, $\lambda_3=\frac{5}{42}$ and $\lambda_2=-\frac{25}{84}$.
Equation~\eqref{eqn:generalizednilsolitonX} gives
\[X={\left(\frac{25}{63},\frac{25}{84},\frac{25}{84},\right)};\]
solving $c_{ijk}^2=\abs{x_{ijk}}$ we get
\[c_{123}=\frac{5}{3 \sqrt{7}},\quad c_{134}=\frac{5}{2 \sqrt{21}},\quad c_{235}=\frac{5}{2 \sqrt{21}};\]
and by $a^2_{ij}=\abs{2A_j^i\Tr D}$ we obtain
\[a_{51}=\pm \frac{5}{7} \sqrt{\frac{5}{3}}.\]
However, $M_{\Delta,2}(\logsign \epsilon)=\logsign X$ gives $\epsilon_1\epsilon_2\epsilon_3=+1=\epsilon_2\epsilon_3\epsilon_5$, thus
\[\epsilon_1=\epsilon_1\epsilon_2\epsilon_3\epsilon_5=\epsilon_5,\]
but $\epsilon_i/\epsilon_j = \sign (A_j^i\Tr D)$ gives $\epsilon_1\epsilon_5=-1$, which is impossible.

\item[{Case $\mathcal{A}_2=\{(1,5),(2,4)\}.$}]
\[
\begin{cases*}
2 \lambda_2+7 A_4^2+5 A_5^1+5=7 \lambda_3\\
2 \lambda_2+A_4^2=\lambda_3+A_5^1\\
5 \lambda_3=-2 \lambda_2\\
2 \lambda_2=7 \lambda_3
\end{cases*},
\]
with solution $A_5^1=-\frac{5}{12}$, $A_4^2=-\frac{5}{12}$, $\lambda_3=0$ and $\lambda_2=0$. This does not give a nondiagonal triple because $\Tr D$ is zero.

\item[{Case $\mathcal{A}_3=\{(1,2),(4,5)\}.$}]
\[
\begin{cases*}
2 \lambda_2+4 A_2^1+4 A_5^4+5=7 \lambda_3\\
2 \lambda_3+2A_2^1+2A_5^4=4\lambda_2\\
\lambda_2+2 \lambda_3=0
\end{cases*},
\]
with solution $A_2^1=-A_5^4-\frac{25}{31}$, $\lambda_3=\frac{5}{31}$ and $\lambda_2=-\frac{10}{31}$.
Equation~\eqref{eqn:generalizednilsolitonX} gives
\[X={\left(\frac{100}{961},x_{134},\frac{150}{961}-x_{134}\right)},\quad A_2^1=-\frac{31}{50} x_{134}-\frac{11}{31},\quad A_5^4=\frac{31}{50} x_{134}-\frac{14}{31};\]
solving $c_{ijk}^2=\abs{x_{ijk}}$ we get
\[c_{123}=\frac{10}{31},\qquad c_{134}=\sqrt{\abs{x_{134}}},\qquad c_{235}=\sqrt{\abs{\frac{150}{961}-x_{134}}};\]
and by $a^2_{ij}=\abs{2A_j^i\Tr D}$ we obtain
\[a_{21}=\pm 5 \sqrt{\frac{2}{31} \left| A_2^1\right|},\qquad a_{54}=\pm5 \sqrt{\frac{2}{31}\left| -A_2^1-\frac{25}{31}\right|}.\]
Since $D$ is a derivation, we obtain $a_{54}c_{134}=c_{234}a_{21}$. Taking the square and substituting the previous equation we get the following
\[\abs{ x_{134}}  \abs{ \frac{31 }{50}x_{134}-\frac{14}{31}} =\abs{\frac{150}{961}-x_{134}} \abs{-\frac{31}{50}x_{134}-\frac{11}{31}},\]
which gives three solutions for $x_{134}$, namely $\frac{75}{961}, \frac{25}{961} \left(3-5 \sqrt{3}\right)$ and $\frac{25}{961} \left(5 \sqrt{3}+3\right)$. However, $M_{\Delta,2}(\logsign \epsilon)=\logsign X$ and $\epsilon_i/\epsilon_j = \sign (A_j^i\Tr D)$ have solutions only for $x_{134}=\frac{75}{961}$, hence
we get
\[x_{134}=\frac{75}{961},\quad A_2^1=-\frac{25}{62},\quad A_5^4=-\frac{25}{62},\quad a_{21}= \pm\frac{25}{31},\quad a_{54}=\pm\frac{25}{31},\]
and the metrics are
\begin{equation}
\label{eqn:metricsonexample}
\begin{gathered}
-e^1\otimes e^1+e^2\otimes e^2-e^3\otimes e^3+e^4\otimes e^4-e^5\otimes e^5,\\  e^1\otimes e^1-e^2\otimes e^2-e^3\otimes e^3-e^4\otimes e^4+e^5\otimes e^5.
\end{gathered}
\end{equation}
Finally, we observe that changing the sign of $e_1,e_3$ and $e_5$ amounts to switching the sign of $a_{12}$ and $a_{45}$, and by Lemma~\ref{lemma:equivalent} we only need to consider the case $a_{21}>0$.
\end{description}
So we conclude that up to equivalence the only solution is given by the Lie algebra $(0,0,\frac{10}{31}e^{12},\allowbreak \frac{5 \sqrt{3}}{31}e^{13},\frac{5 \sqrt{3}}{31}e^{23})$ with metrics~\eqref{eqn:metricsonexample} and derivation \[D=\left(\frac{15}{31},-\frac{10}{31},\frac{5}{31},\frac{20}{31},-\frac{5}{31}\right)^\diagonal+\frac{25}{31}e^{1}\otimes e_2+\frac{25}{31}e^{4}\otimes e_5.\]
\end{example}

Applying Algorithm~\ref{alg:classification} poses several problems. We illustrate the issues and how we addressed them in our implementation~\cite{esticax}, based on the \CC{} library GiNaC~\cite{Bauer_Frink_Kreckel_2002}.
\begin{enumerate}
\item At line $2$ of Algorithm~\ref{alg:classification}, a classification of nice diagrams up to automorphisms is needed. An algorithm to this effect was introduced in~\cite{Conti_Rossi_2019} and implemented in~\cite{demonblast}; thus, we resorted to the same code.

\item For a fixed nice diagram, the set of possible $\mathcal{A}$ is generally quite large; however, as observed in Example~\ref{example:of_the_algorithm}, at line $2$ it is not necessary to consider all possible $\mathcal A$, but only those such that for some nice Lie algebra with diagram $\Delta$ there exist derivations whose nondiagonal entries are exactly parameterized by $\mathcal A$. In general, the nice diagram does not determine the nice Lie algebra uniquely; however, it is always possible to write down a linear space that contains the space of derivations of all Lie algebras with a given nice diagram. This optimization also has the effect of eliminating nice diagrams which are not associated to any nice Lie algebra.

\item At lines $4$--$5$, computing $\lambda_1,\dotsc, \lambda_n$, $A$ and $X$ are linear computations. These may result in solutions depending on parameters: as observed in Remark~\ref{remark:A_ij_determined}, if~\ref{cond:nondiagonal5} does not determine $\lambda_1,\dotsc, \lambda_n$, then the $A_j^i$ are not uniquely determined. Additionally, it may be the case that~\eqref{eqn:generalizednilsolitonX} does not determine $X$ if $\tran M_\Delta$ is not injective, i.e. the root matrix $M_\Delta$ is not surjective.

\item At line $6$, nonlinear computations take place: $X$ determines the structure constants, but square roots appear in the expression. Simple equations such as those of Example~\ref{example:of_the_algorithm}, case $\mathcal{A}_3$ can be solved automatically by rationalizing and solving a second degree equation in one variable, and we implemented this in~\cite{esticax}, but this becomes hopeless as free parameters increase or when equations contain the sum of three square roots.
\item At line $9$, we need to extract a section. For this, we used the explicit form of the group action given in Lemma~\ref{lemma:equivalent} and a simple scheme where the set $L$ is progressively reduced by an iteration that eliminates elements that are in the orbit of preceding elements.
\end{enumerate}
For the reasons outlined above, in dimension $6$ and higher, our software is not able to solve all cases.  With this in mind, we have restricted our classifications to
$n\leq 5$, and $6\leq n\leq9$ with surjective root matrix and $\mathcal{A}$ chosen so that~\ref{cond:nondiagonal5} consists of $\abs{\mathcal{A}}$ independent equations. Notice that in dimension $5$ the root matrix is automatically surjective. The resulting solutions of~\eqref{eqn:generalizednilsoliton}, each giving rise to an Einstein solvmanifold in one dimension higher, are given in Tables~\ref{table:34}, \ref{table:5}, A, B, C and D (see the ancillary files). Each table row contains a Lie algebra $\lie g$, a derivation $D$, and then the list of compatible metrics. The derivation $D$ is expressed as a sum $v+\sum a_{ij}e^i\otimes e_j$, where $v$ is a vector representing the diagonal derivation $v^\diagonal$. Since the nice basis is orthonormal, the metric is specified by giving the indices of the timelike vectors in the basis; thus, for instance, $12$ represents the metric $\diag(-1,-1,1,\dotsc, 1)$. The set of admissible signatures is denoted by $\mathbf{S}$. We obtain:
\begin{theorem}
Every solution of~\eqref{eqn:generalizednilsoliton} arising from a nondiagonal triple on a nice diagram with $n\leq 5$ is equivalent to exactly one entry in Tables~\ref{table:34} or~\ref{table:5}.
\end{theorem}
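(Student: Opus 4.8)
The statement is a completeness-and-irredundancy claim for the output of Algorithm~\ref{alg:classification}, so the plan is to argue that in dimension $n\leq 5$ the algorithm enumerates \emph{every} nondiagonal solution and lists each equivalence class \emph{once}. First I would invoke Theorem~\ref{thm:almostdiagonalwithdiagrams}, which guarantees that any solution of~\eqref{eqn:generalizednilsoliton} arising from a nondiagonal triple is encoded by data $(\Delta,\mathcal A,D,A,\{c_I\},\epsilon)$ satisfying conditions~\ref{cond:nondiagonal1}--\ref{cond:nondiagonal6} together with the constraints on $X$ and $\epsilon$; and Lemma~\ref{lemma:equivalent}, which identifies the equivalence relation on solutions with the orbits of the group $\Z_2^n\rtimes\Aut(\Delta)$. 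Thus the theorem reduces to showing that the algorithm traverses a complete set of such data and correctly quotients by this action.

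Next I would establish finiteness and effectivity of the discrete steps. Nice diagrams with $n\leq5$ nodes form a finite list, classified up to isomorphism by the procedure of~\cite{Conti_Rossi_2019}; for each $\Delta$ the sets $\mathcal A$ obeying~\ref{cond:nondiagonal1}--\ref{cond:nondiagonal3} are finite and can be reduced modulo $\Aut(\Delta)$. For a fixed pair $(\Delta,\mathcal A)$, conditions~\ref{cond:nondiagonal4}--\ref{cond:nondiagonal5} pin down the diagonal part $(\lambda_1,\dotsc,\lambda_n)\in\ker M_\Delta$ uniquely (Remark~\ref{remark:A_ij_determined}) and cut out an affine solution space for $A$. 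Crucially, in dimension $5$ the root matrix $M_\Delta$ is automatically surjective, hence $\tran M_\Delta$ is injective, so~\eqref{eqn:generalizednilsolitonX} determines $X$ as a function of the residual parameters in $A$ with no further freedom; this is precisely where the parametric obstructions that plague higher dimensions are absent.

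The decisive step — and the one I expect to be the main obstacle — is the exhaustive resolution of the nonlinear system at line $6$: imposing $c_I^2=\abs{x_I}$, the Jacobi identity, the requirement that $D$ be a derivation, and the signature conditions $M_{\Delta,2}(\logsign\epsilon)=\logsign X$ and $\epsilon_i/\epsilon_j=\sign(A_j^i\Tr D)$. When $A$, and hence $X$, still carries a free parameter, the derivation condition yields polynomial relations in it — exactly the phenomenon seen in Example~\ref{example:of_the_algorithm}, case $\mathcal A_3$, where $a_{54}c_{134}=c_{234}a_{21}$ produces, after rationalization, a single quadratic whose finitely many roots are then filtered by the signature constraints. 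I would argue that in dimension $\leq5$ this always occurs: the number of residual parameters stays small enough that the equations reduce to low-degree univariate polynomials solvable in closed form, so the solver of~\cite{esticax} is genuinely complete and returns all solutions.

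Finally, irredundancy (the ``exactly one'') follows from the two-stage quotient built into the algorithm: reducing $\mathcal A$ modulo $\Aut(\Delta)$ in the outer loop and extracting a section of $L/(\Z_2^n\rtimes\Aut(\Delta)^{\mathcal A})$ at line $9$ together realize a transversal for the full $\Z_2^n\rtimes\Aut(\Delta)$-action of Lemma~\ref{lemma:equivalent}. Checking that distinct table entries are genuinely inequivalent is then a finite verification, carried out directly on the output recorded in Tables~\ref{table:34} and~\ref{table:5}.
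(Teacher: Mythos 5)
Your proposal is correct and follows essentially the same route as the paper: the theorem is, in the paper, the output of running Algorithm~\ref{alg:classification} for $n\leq 5$, whose completeness rests exactly on Theorem~\ref{thm:almostdiagonalwithdiagrams}, Lemma~\ref{lemma:equivalent}, Remark~\ref{remark:A_ij_determined}, the automatic surjectivity of $M_\Delta$ in low dimension, and the solvability of the residual low-degree equations by the solver of~\cite{esticax} (the phenomenon of Example~\ref{example:of_the_algorithm}, case $\mathcal A_3$). Your reconstruction of the two-stage quotient as a transversal for the $\Z_2^n\rtimes\Aut(\Delta)$-action matches the paper's justification, and, like the paper, the decisive nonlinear step is ultimately delegated to the verified computer run rather than a hand argument.
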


\begin{theorem}
Every solution of~\eqref{eqn:generalizednilsoliton} arising from a nondiagonal triple on a nice diagram with $6\leq n\leq 9$, a surjective root matrix and $\mathcal{A}$ chosen so that~\ref{cond:nondiagonal5} consists of $\abs{\mathcal{A}}$ independent equations is equivalent to exactly one entry in Tables A, B, C, D (see ancillary files).
\end{theorem}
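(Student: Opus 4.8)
The plan is to prove this exactly as for the lower-dimensional case: the statement is the assertion that Algorithm~\ref{alg:classification}, restricted by the two technical hypotheses, is complete and sound, and that its recorded output is Tables~A, B, C and~D. First I would translate the classification into a problem about group orbits. By Theorem~\ref{thm:almostdiagonalwithdiagrams}, the solutions of~\eqref{eqn:generalizednilsoliton} that arise from a nondiagonal triple are exactly the metrics determined by nondiagonal solutions $(\{c_I\},\epsilon)$ on nice Lie algebras with nice diagram $\Delta$; by Lemma~\ref{lemma:equivalent}, two of them are equivalent if and only if they belong to the same orbit of $\Z_2^n\rtimes\Aut(\Delta)$, where $\Delta$ ranges over isomorphism classes of nice diagrams. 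Thus it suffices to show that the algorithm produces a transversal for this action.

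The second step is to verify that, under the hypotheses, the algorithm terminates and involves no free parameters. Line~2 enumerates one diagram per isomorphism class, a classification available in dimension $\leq 9$ from~\cite{Conti_Rossi_2019,demonblast}. For each admissible $\mathcal A$, the assumption that~\ref{cond:nondiagonal5} consists of $\abs{\mathcal A}$ independent equations forces, by Remark~\ref{remark:A_ij_determined}, the diagonal entries $\lambda_i$ and the constants $A_j^i$ to be uniquely determined; surjectivity of $M_\Delta$ makes $\tran M_\Delta$ injective, so~\eqref{eqn:generalizednilsolitonX} determines $X$ uniquely. Consequently $\Tr D$, the moduli $\abs{c_I}=\sqrt{\abs{x_I}}$, and $\abs{a_{ji}}=\sqrt{\abs{2A_j^i\Tr D}}$ are all fixed, so line~6 collapses to a finite check — over the sign assignments of the structure constants — of whether the Jacobi identity holds and $D$ is a derivation; line~7 is a finite check over the $2^n$ choices of $\epsilon\in\{\pm1\}^n$. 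Hence every inner loop is finite and no continuous families occur.

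The third step is soundness of the two-step quotient taken at lines~2 and~9. By Lemma~\ref{lemma:equivalent} an equivalence induced by $f\in\Aut(\Delta)$ carries the support $\mathcal A$ to $f(\mathcal A)$, so equivalent solutions necessarily have index sets in the same $\Aut(\Delta)$-orbit, and solutions on non-isomorphic diagrams are never equivalent; it is therefore legitimate to fix one diagram per isomorphism class and one $\mathcal A$ per $\Aut(\Delta)$-orbit. For such a fixed pair the residual symmetries are exactly the stabilizer subgroup $\Z_2^n\rtimes(\Aut\Delta)^{\mathcal A}$, so factoring $L$ by it yields precisely one representative of each full equivalence class. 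Completeness (every solution appears) follows because the loops exhaust all diagrams, all valid $\mathcal A$, all sign choices of the $c_I$, and all $\epsilon$; the ``exactly one'' assertion follows because the section-extraction discards exactly the repetitions inside each orbit.

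The main obstacle is verificational rather than conceptual: one must be confident that the implementation~\cite{esticax} faithfully realizes the algorithm — in particular that the derivation and Jacobi conditions at line~6 are tested exactly rather than numerically, which the use of the symbolic library GiNaC is meant to guarantee — and that the input classification of nice diagrams in dimensions $6$ through $9$ is itself complete. Granting these, the proof concludes by identifying the computed output of the algorithm with the entries of Tables~A, B, C and~D.
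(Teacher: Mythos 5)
Your proposal is correct and follows essentially the same route as the paper, which presents this theorem as the output of Algorithm~\ref{alg:classification} together with its implementation~\cite{esticax}: the two technical hypotheses guarantee (via Remark~\ref{remark:A_ij_determined} and injectivity of $\tran M_\Delta$) that $A$, the $\lambda_i$ and $X$ are uniquely determined so that lines~6--7 reduce to finite exact sign checks, correctness of the two-step quotient follows from Lemma~\ref{lemma:equivalent}, and the remaining content is trusting the computation and the input classification of nice diagrams. Your writeup simply makes explicit the argument the paper leaves implicit, with no gaps.
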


{\setlength{\tabcolsep}{2pt}
\begin{longtable}[htpc]{>{\ttfamily}c C >{\renewcommand{\arraystretch}{1.2}}C C}
\caption{\label{table:34}Solutions of~\eqref{eqn:generalizednilsoliton} obtained with Algorithm~\ref{alg:classification} with $n=3,4$}\\
\toprule
\textnormal{Name $\Delta$} & \g &   D & \mathbf{S} \\
\midrule
\endfirsthead
\multicolumn{4}{c}{\tablename\ \thetable\ -- \textit{Continued from previous page}} \\
\toprule
\textnormal{Name $\Delta$} & \g &   D & \mathbf{S} \\
\midrule
\endhead
\bottomrule\\[-7pt]
\multicolumn{4}{c}{\tablename\ \thetable\ -- \textit{Continued to next page}} \\
\endfoot
\bottomrule\\[-7pt]
\endlastfoot
31:1&0,0,\frac{4}{7} e^{12}&\begin{array}{cc}
(\frac{6}{7},-\frac{2}{7},\frac{4}{7})\\
+\frac{8}{7} e^1\otimes e_2
\end{array}
&\{13, 23\}\\
\midrule
3:1&0,0,0&\begin{array}{cc}
(1,-\frac{1}{5},\frac{2}{5})\\
+\frac{6}{5} e^1\otimes e_2
\end{array}
&\{1, 13, 2, 23\}\\
\midrule
421:1&0,0,\frac{14}{51} \sqrt{3} e^{12},\frac{7}{51} \sqrt{34} e^{13}&\begin{array}{cc}
(\frac{28}{51},-\frac{7}{17},\frac{7}{51},\frac{35}{51})\\
+\frac{7}{51} \sqrt{66} e^1\otimes e_2
\end{array}
&\{13, 234\}\\
\midrule
41:1&0,0,0,\frac{2}{17} \sqrt{22} e^{12}&\begin{array}{cc}
(\frac{15}{17},-\frac{7}{17},\frac{6}{17},\frac{8}{17})\\
+\frac{22}{17} e^1\otimes e_2
\end{array}
&\{134, 14, 234, 24\}\\
\midrule
41:1&0,0,0,\frac{1}{3} \sqrt{6} e^{12}&\begin{array}{cc}
(0,0,1,0)\\
+\frac{2}{3} \sqrt{3} e^3\otimes e_4
\end{array}
&\{123, 14, 3\}\\
\midrule
41:1&0,0,0,\frac{1}{3} \sqrt{6} e^{12}&\begin{array}{cc}
(\frac{2}{3},0,-\frac{1}{3},\frac{2}{3})\\
+\frac{2}{3} \sqrt{3} e^1\otimes e_3
\end{array}
&\{12, 14, 234, 3\}\\
\midrule
41:1&0,0,0,\frac{2}{9} \sqrt{6} e^{12}&\begin{array}{cc}
(\frac{1}{3},-\frac{1}{3},\frac{2}{3},0)\\
+\frac{2}{3} e^1\otimes e_2+\frac{4}{9} \sqrt{3} e^3\otimes e_4
\end{array}
&\{14, 24\}\\
\midrule
4:1&0,0,0,0&\begin{array}{cc}
(1,-\frac{1}{3},\frac{1}{3},\frac{1}{3})\\
+\frac{4}{3} e^1\otimes e_2
\end{array}
&\begin{array}{c}
\{1, 13, 134,\\
2, 23, 234\}
\end{array}
\\
\midrule
4:1&0,0,0,0&\begin{array}{cc}
(\frac{3}{5},\frac{3}{5},-\frac{1}{5},-\frac{1}{5})\\
+\frac{4}{5} e^1\otimes e_3+\frac{4}{5} e^2\otimes e_4
\end{array}
&\{12, 14, 34\}\\
\end{longtable}
}

\begin{footnotesize}
{\setlength{\tabcolsep}{2pt}
\begin{longtable}[htpc]{>{\ttfamily}c C >{\renewcommand{\arraystretch}{1.2}}C C}
\caption{\label{table:5}Solutions of~\eqref{eqn:generalizednilsoliton} obtained with Algorithm~\ref{alg:classification} with $n=5$}\\
\toprule
\textnormal{Name $\Delta$} & \g &   D & \mathbf{S} \\
\midrule
\endfirsthead
\multicolumn{4}{c}{\tablename\ \thetable\ -- \textit{Continued from previous page}} \\
\toprule
\textnormal{Name $\Delta$} & \g &   D & \mathbf{S} \\
\midrule
\endhead
\bottomrule\\[-7pt]
\multicolumn{4}{c}{\tablename\ \thetable\ -- \textit{Continued to next page}} \\
\endfoot
\bottomrule\\[-7pt]
\endlastfoot
5321:1&0,0,\frac{3}{10} \sqrt{5} e^{12},\frac{3}{10} \sqrt{2} e^{13},\frac{3}{10} \sqrt{5} e^{14}&\begin{array}{cc}
(-\frac{3}{10},\frac{3}{4},\frac{9}{20},\frac{3}{20},-\frac{3}{20})\\
+\frac{3}{10} \sqrt{14} e^2\otimes e_5
\end{array}
&\{124, 135\}\\
\midrule
5321:1&\begin{array}{c}
0,0,\frac{11}{159} \sqrt{6} e^{12},\frac{11}{159} \sqrt{106} e^{13},\\
\frac{22}{53} \sqrt{3} e^{14}
\end{array}
&\begin{array}{cc}
(\frac{55}{159},-\frac{22}{53},-\frac{11}{159},\frac{44}{159},\frac{33}{53})\\
+\frac{22}{159} \sqrt{57} e^1\otimes e_2
\end{array}
&\{14, 2\}\\
\midrule
532:1&0,0,\frac{10}{31} e^{12},\frac{5}{31} \sqrt{3} e^{13},\frac{5}{31} \sqrt{3} e^{23}&\begin{array}{cc}
(\frac{15}{31},-\frac{10}{31},\frac{5}{31},\frac{20}{31},-\frac{5}{31})\\
+\frac{25}{31} e^1\otimes e_2+\frac{25}{31} e^4\otimes e_5
\end{array}
&\{135, 234\}\\
\midrule
521:1&0,0,0,\frac{4}{15} \sqrt{10} e^{12},\frac{4}{15} \sqrt{3} e^{14}&\begin{array}{cc}
(-\frac{2}{15},\frac{2}{3},-\frac{2}{5},\frac{8}{15},\frac{2}{5})\\
+\frac{4}{15} \sqrt{21} e^2\otimes e_3
\end{array}
&\{125, 134, 245, 3\}\\
\midrule
521:1&0,0,0,\frac{5}{51} \sqrt{6} e^{12},\frac{5}{51} \sqrt{34} e^{14}&\begin{array}{cc}
(\frac{10}{51},-\frac{5}{17},\frac{10}{17},-\frac{5}{51},\frac{5}{51})\\
+\frac{5}{51} \sqrt{42} e^1\otimes e_2+\frac{5}{51} \sqrt{42} e^3\otimes e_5
\end{array}
&\{134, 245\}\\
\midrule
521:1&0,0,0,\frac{4}{15} \sqrt{3} e^{12},\frac{4}{15} \sqrt{10} e^{14}&\begin{array}{cc}
(-\frac{2}{15},\frac{1}{5},1,\frac{1}{15},-\frac{1}{15})\\
+\frac{4}{15} \sqrt{21} e^3\otimes e_5
\end{array}
&\{125, 134, 245, 3\}\\
\midrule
521:1&0,0,0,\frac{4}{21} \sqrt{14} e^{12},\frac{4}{21} \sqrt{14} e^{14}&\begin{array}{cc}
(\frac{1}{3},-\frac{1}{21},-\frac{3}{7},\frac{2}{7},\frac{13}{21})\\
+\frac{4}{21} \sqrt{30} e^1\otimes e_3
\end{array}
&\{125, 14, 2345, 3\}\\
\midrule
521:1&0,0,0,\frac{4}{15} \sqrt{3} e^{12},\frac{4}{15} \sqrt{10} e^{14}&\begin{array}{cc}
(\frac{17}{30},-\frac{1}{2},\frac{3}{10},\frac{1}{15},\frac{19}{30})\\
+\frac{4}{15} \sqrt{21} e^1\otimes e_2
\end{array}
&\{134, 14, 2345, 245\}\\
\midrule
521:2&
\begin{array}{c}
0,0,0,\frac{6}{17} \sqrt{6} e^{12},\\
\frac{6}{17} e^{13}+\frac{6}{17} \sqrt{2} e^{24}
\end{array}
&\begin{array}{cc}
(\frac{12}{17},-\frac{3}{17},-\frac{6}{17},\frac{9}{17},\frac{6}{17})\\
+\frac{12}{17} \sqrt{3} e^1\otimes e_3
\end{array}
&\{234, 3\}\\
\midrule
521:2&
\begin{array}{c}0,0,0,\frac{36}{115} \sqrt{3} e^{12},\\
\frac{9}{506} \sqrt{110} e^{13}+\frac{9}{2530} \sqrt{2090} e^{24}
\end{array}
&\begin{array}{cc}
(-\frac{9}{23},\frac{36}{115},\frac{72}{115},-\frac{9}{115},\frac{27}{115})\\
+\sqrt{\frac{143127}{290950}} e^2\otimes e_1-\sqrt{\frac{7533}{11638}} e^3\otimes e_4
\end{array}
&\{145, 24\}\\
\midrule
52:1&0,0,0,\frac{1}{19} \sqrt{102} e^{12},\frac{6}{19} e^{13}&\begin{array}{cc}
(\frac{3}{19},\frac{6}{19},-\frac{6}{19},\frac{9}{19},-\frac{3}{19})\\
+\frac{2}{19} \sqrt{33} e^1\otimes e_3+\frac{5}{19} \sqrt{6} e^2\otimes e_5
\end{array}
&\{145, 35\}\\
\midrule
52:1&0,0,0,\frac{3}{11} e^{12},\frac{3}{11} e^{13}&\begin{array}{cc}
(\frac{1}{11},\frac{6}{11},-\frac{3}{11},\frac{7}{11},-\frac{2}{11})\\
+\frac{9}{11} e^2\otimes e_3+\frac{9}{11} e^4\otimes e_5
\end{array}
&\{125, 134, 24, 35\}\\
\midrule
52:1&0,0,0,\frac{1}{6} \sqrt{6} e^{12},\frac{1}{6} \sqrt{6} e^{13}&\begin{array}{cc}
(-\frac{1}{3},\frac{1}{3},\frac{1}{3},0,0)\\
+\frac{1}{6} \sqrt{10} e^2\otimes e_5-\frac{1}{6} \sqrt{10} e^3\otimes e_4
\end{array}
&\{123, 145, 24\}\\
\midrule
52:1&0,0,0,\frac{1}{6} \sqrt{6} e^{12},\frac{1}{6} \sqrt{6} e^{13}&\begin{array}{cc}
(-\frac{1}{3},\frac{1}{3},\frac{1}{3},0,0)\\
+\frac{1}{6} \sqrt{10} e^2\otimes e_5+\frac{1}{6} \sqrt{10} e^3\otimes e_4
\end{array}
&\{123, 145, 24\}\\
\midrule
52:1&0,0,0,\frac{1}{14} \sqrt{91} e^{12},\frac{1}{14} \sqrt{91} e^{13}&\begin{array}{cc}
(-\frac{3}{14},\frac{3}{4},\frac{1}{28},\frac{15}{28},-\frac{5}{28})\\
+\frac{1}{7} \sqrt{65} e^2\otimes e_5
\end{array}
&\{123, 145, 24, 35\}\\
\midrule
52:1&0,0,0,\frac{2}{53} \sqrt{29} e^{12},\frac{6}{53} \sqrt{58} e^{13}&\begin{array}{cc}
(\frac{36}{53},-\frac{22}{53},-\frac{3}{53},\frac{14}{53},\frac{33}{53})\\
+\frac{2}{53} \sqrt{1102} e^1\otimes e_2
\end{array}
&\{13, 15, 2, 235\}\\
\midrule
51:1&0,0,0,0,\frac{1}{3} \sqrt{3} e^{12}&\begin{array}{cc}
(\frac{1}{4},\frac{1}{4},-\frac{1}{4},-\frac{1}{4},\frac{1}{2})\\
+\frac{1}{6} \sqrt{15} e^1\otimes e_3+\frac{1}{6} \sqrt{15} e^2\otimes e_4
\end{array}
&\{12, 145, 34\}\\
\midrule
51:1&0,0,0,0,\frac{1}{5} \sqrt{7} e^{12}&\begin{array}{cc}
(\frac{9}{10},-\frac{1}{2},\frac{3}{10},\frac{3}{10},\frac{2}{5})\\
+\frac{7}{5} e^1\otimes e_2
\end{array}
&\begin{array}{c}
\{1345, 135, 15, 2345\\
235, 25\}\\
\end{array}
\\
\midrule
51:1&0,0,0,0,\frac{1}{5} \sqrt{7} e^{12}&\begin{array}{cc}
(\frac{1}{5},\frac{1}{5},1,-\frac{2}{5},\frac{2}{5})\\
+\frac{7}{5} e^3\otimes e_4
\end{array}
&\begin{array}{c}
\{123, 124, 135, 145\\
3, 4\}\\
\end{array}
\\
\midrule
51:1&0,0,0,0,\frac{1}{21} \sqrt{322} e^{12}&\begin{array}{cc}
(\frac{2}{3},-\frac{1}{21},-\frac{3}{7},\frac{2}{7},\frac{13}{21})\\
+\frac{1}{21} \sqrt{690} e^1\otimes e_3
\end{array}
&\begin{array}{c}
\{12, 124, 145, 15\\
2345, 235, 3, 34\}\\
\end{array}
\\
\midrule
51:1&0,0,0,0,\frac{1}{21} \sqrt{322} e^{12}&\begin{array}{cc}
(-\frac{1}{21},-\frac{1}{21},1,\frac{2}{7},-\frac{2}{21})\\
+\frac{1}{21} \sqrt{690} e^3\otimes e_5
\end{array}
&\begin{array}{c}
\{123, 1234, 145, 15\\
3, 34\}\\
\end{array}
\\
\midrule
51:1&0,0,0,0,\frac{2}{65} \sqrt{322} e^{12}&\begin{array}{cc}
(\frac{21}{65},-\frac{5}{13},\frac{42}{65},\frac{12}{65},-\frac{4}{65})\\
+\frac{2}{65} \sqrt{690} e^3\otimes e_5+\frac{46}{65} e^1\otimes e_2
\end{array}
&\{145, 15, 245, 25\}\\
\midrule
51:1&0,0,0,0,\frac{1}{3} \sqrt{3} e^{12}&\begin{array}{cc}
(\frac{1}{4},-\frac{1}{6},\frac{7}{12},-\frac{1}{4},\frac{1}{12})\\
+\frac{1}{6} \sqrt{15} e^1\otimes e_4+\frac{1}{6} \sqrt{15} e^3\otimes e_5
\end{array}
&\{123, 15, 245, 34\}\\
\midrule
51:1&0,0,0,0,\frac{2}{17} \sqrt{7} e^{12}&\begin{array}{cc}
(\frac{9}{17},-\frac{5}{17},\frac{10}{17},-\frac{4}{17},\frac{4}{17})\\
+\frac{14}{17} e^1\otimes e_2+\frac{14}{17} e^3\otimes e_4
\end{array}
&\{135, 145, 235, 245\}\\
\midrule
51:2&0,0,0,0,\frac{3}{11} e^{12}+\frac{3}{11} e^{34}&\begin{array}{cc}
(\frac{6}{11},-\frac{3}{11},\frac{6}{11},-\frac{3}{11},\frac{3}{11})\\
+\frac{9}{11} e^1\otimes e_2+\frac{9}{11} e^3\otimes e_4
\end{array}
&\{135, 145, 245\}\\
\midrule
51:2&0,0,0,0,\frac{3}{11} e^{12}+\frac{3}{11} e^{34}&\begin{array}{cc}
(\frac{6}{11},-\frac{3}{11},\frac{6}{11},-\frac{3}{11},\frac{3}{11})\\
+\frac{9}{11} e^1\otimes e_2-\frac{9}{11} e^3\otimes e_4
\end{array}
&\{135, 145, 245\}\\
\midrule
51:2&0,0,0,0,\frac{3}{11} e^{12}+\frac{3}{11} e^{34}&\begin{array}{cc}
(\frac{6}{11},-\frac{3}{11},\frac{6}{11},-\frac{3}{11},\frac{3}{11})\\
+\frac{9}{11} e^1\otimes e_4+\frac{9}{11} e^3\otimes e_2
\end{array}
&\{12, 135, 245\}\\
\midrule
51:2&0,0,0,0,\frac{6}{13} e^{12}+\frac{6}{13} e^{34}&\begin{array}{cc}
(\frac{12}{13},-\frac{6}{13},\frac{3}{13},\frac{3}{13},\frac{6}{13})\\
+\frac{18}{13} e^1\otimes e_2
\end{array}
&\{135, 235\}\\
\midrule
5:1&0,0,0,0,0&\begin{array}{cc}
(\frac{7}{12},\frac{7}{12},-\frac{1}{4},-\frac{1}{4},\frac{1}{6})\\
+\frac{5}{6} e^1\otimes e_3+\frac{5}{6} e^2\otimes e_4
\end{array}
&\begin{array}{c}
\{12, 125, 14,\\
145, 34, 345\}
\end{array}
\\
\midrule
5:1&0,0,0,0,0&\begin{array}{cc}
(1,-\frac{3}{7},\frac{2}{7},\frac{2}{7},\frac{2}{7})\\
+\frac{10}{7} e^1\otimes e_2
\end{array}
&\begin{array}{c}
\{1, 13, 134, 1345,\\
2, 23, 234, 2345\}
\end{array}
\\
\end{longtable}
}
\end{footnotesize}

\FloatBarrier

In order to show that the construction of this paper yields new metrics, we compare the Einstein solvmanifolds we obtain to the known Einstein metrics of pseudo-Iwasawa type. Specifically, we compare with the Einstein solvmanifolds obtained by extending a nilsoliton of dimension $\leq4$ that admits an orthonormal nice basis.
\begin{proposition}
\label{prop:notthesame}
The Einstein solvmanifolds obtained from the metrics of Table~\ref{table:34} are not isometric to any pseudo-Iwasawa Einstein solvmanifolds obtained by extending a nilsoliton admitting a nice orthonormal basis.
\end{proposition}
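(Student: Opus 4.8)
The plan is to distinguish the two families by \emph{curvature invariants}, which are preserved under isometries. Since all the manifolds involved are homogeneous, every scalar curvature invariant is a constant function, computable algebraically from the metric Lie algebra via the Koszul formula for the Levi--Civita connection. Note that isometric solvmanifolds need not arise from isomorphic metric Lie algebras, so a purely algebraic comparison of the underlying Lie algebras does not suffice, and curvature invariants are the appropriate tool. The subtlety is that a nilsoliton may be rescaled without affecting~\eqref{eqn:nilsoliton}, so the pseudo-Iwasawa extensions attached to a fixed nilsoliton form a one-parameter family under $g\mapsto c\,g$; to neutralize this freedom I would compare a \emph{scale-invariant} quantity. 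Writing $s$ for the scalar curvature and $\norm{R}^2=R_{abcd}R^{abcd}$ for the squared norm of the curvature tensor, the ratio
\[
\tau=\frac{\norm{R}^2}{s^2}
\]
is invariant under $g\mapsto c\,g$ and hence, on the nilsoliton side, depends only on the nilsoliton up to scaling. Ricci-flatness ($s=0$) is itself an isometry invariant, so the cases with $\Tr((D^s)^2)=0$ can be separated from the rest at the outset, and for the remaining cases $\tau$ is well defined.

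Concretely, the argument proceeds in three steps. First, for each of the finitely many entries of Table~\ref{table:34} I would form the associated Einstein solvmanifold $\g\rtimes_D\R$ by Theorem~\ref{thm:extendgeneralizednilsoliton} and record its dimension, signature, and value of $\tau$. Second, I would take the classification of nilpotent Lie algebras of dimension $\leq4$ admitting a nice orthonormal basis together with all diagonal metrics (of every signature) solving the nilsoliton equation~\eqref{eqn:nilsoliton} (see~\cite{Conti_Rossi_2020,Conti_Rossi_best_before,Conti_Rossi_2022}); for each such nilsoliton the extending derivation $D=\Ric-\lambda\id$ is diagonal in the nice orthonormal basis, hence symmetric, so the pseudo-Iwasawa extension $\g\rtimes_D\R$ is Einstein, and I would record its dimension, signature and $\tau$. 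Third, I would compare: an isometry forces equality of dimension, of signature, and of $\tau$, so it suffices to verify that no entry of Table~\ref{table:34} shares all three with any nilsoliton extension. Since the two sides live in dimensions $4$ and $5$ only, this is a finite check, which I would carry out with the computer algebra already used in~\cite{esticax}.

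I expect the main obstacle to be twofold. On the bookkeeping side, one must enumerate \emph{all} admissible signatures on the nilsoliton side, since the same nilpotent Lie algebra typically carries several inequivalent indefinite nilsolitons, and omitting a signature would leave a gap in the comparison. On the mathematical side, the genuine risk is a \emph{coincidence}: $\tau$ is a single invariant, and it could happen that an entry of Table~\ref{table:34} has the same dimension, signature and $\tau$ as a nilsoliton extension without the two being isometric. In that event the plan is to introduce a finer scale-invariant invariant of the same weight; note that $\norm{\Ric}^2/s^2$ equals $1/\dim$ for every Einstein metric and is therefore useless here, so one would instead use the squared norm of the covariant derivative of the curvature, $\norm{\nabla R}^2/s^3$, or the characteristic data of the curvature operator on $2$-forms, and repeat the comparison until the families are separated. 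Because the list in Table~\ref{table:34} is short and the reference nilsolitons in dimensions $3$ and $4$ are well understood, I would expect $\tau$ together with the signature to already suffice, with at most a handful of cases requiring a second invariant.
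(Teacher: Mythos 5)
Your overall strategy --- reduce the statement to a finite, computer-aided comparison of scale-invariant curvature data between the extensions coming from Table~\ref{table:34} and the pseudo-Iwasawa extensions of the classified nice nilsolitons in dimensions $3$ and $4$ --- is the same as the paper's. The gap is in the choice of invariant. Your primary invariant $\tau=\norm{R}^2/s^2$ carries exactly the same information as the paper's coefficient $a_2$: writing $R$ for the curvature operator on $2$-forms and $\tilde R=\frac{1}{\Tr R}R$, one has $\norm{R}^2=4\Tr(R^2)$ and $s=2\Tr R$ in any signature, so $\tau=\Tr(\tilde R^2)=1-2a_2$. The paper's Tables~\ref{tbl:diagonalnilsolitonsa2} and~\ref{tbl:gennilsolitonsa2} show that this invariant \emph{coincides} across the two families in five of the nine cases ($a_2=\frac13$, $\frac{5}{12}$, $\frac{3683}{9075}$, and twice $\frac{9}{20}$), with compatible signatures, so the ``coincidence'' you flag as a remote risk is in fact the typical situation, and your expectation that $\tau$ plus signature already suffice is false. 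Worse, for the two extensions of the Heisenberg algebra even the \emph{entire} characteristic polynomial of $\tilde R$ is identical in the two cases ($\lambda^6-\lambda^5+\frac{\lambda^4}{3}-\frac{5\lambda^3}{108}+\frac{\lambda^2}{432}$), so no invariant built from the eigenvalues of $R$ can separate them; this rules out the spectral part of your fallback list as well.

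What actually separates these pairs in the paper is a non-scalar isometry invariant: the conjugacy class of $R$, concretely whether $R$ is diagonalizable. This is meaningful precisely because the metrics are indefinite --- the scalar product induced on $\Lambda^2$ is not definite, the spectral theorem does not apply, and it turns out that the problematic extensions built from nondiagonal triples have non-diagonalizable $R$, while all the nilsoliton extensions have diagonalizable $R$. Your phrase ``characteristic data of the curvature operator on $2$-forms'', read generously, could be stretched to include Jordan type, but your proposal never identifies this phenomenon, and your remaining fallback ($\norm{\nabla R}^2/s^3$ or further scalar polynomial invariants) comes with no guarantee: in indefinite signature, non-isometric homogeneous metrics can share all scalar polynomial curvature invariants, so ``repeat with finer scalar invariants until separated'' is not a procedure that must terminate, and at weight two it demonstrably fails here. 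To close the gap, replace (or supplement) $\tau$ by a test for diagonalizability of $R$ over $\C$, which is how the paper completes the comparison.
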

\begin{proof}
Diagonal nilsoliton metrics on irreducible nice Lie algebras of dimension $3$ and $4$ are classified in~\cite{Conti_Rossi_2022}. Using the same methods, the classification can be extended to include the reducible nice Lie algebra $(0,0,0,e^{12})$. For each nilsoliton obtained in this way, we can write down explicitly the resulting Einstein solvmanifold $\tilde\g$ and compute the curvature tensor. The metric is determined only up to a multiple; we will fix a normalization, so the statement must be proved up to isometry and rescaling.

By raising an index, we view the curvature tensor as an element of $\Lambda^2\tilde\g^*\otimes\tilde\g\otimes\tilde\g$ rather than $\Lambda^2\tilde\g^*\otimes\tilde\g^*\otimes\tilde\g$, obtaining an endomorphism $R\colon\Lambda^2\tilde\g\to\Lambda^2\tilde\g$. We use the conjugacy class of $R$ as an invariant. More precisely, we determine the characteristic polynomial and whether $R$ is diagonalizable; notice that $R$ is symmetric relative to the scalar product induced by $\tilde g$ on $\Lambda^2\tilde\g$, but the latter is not definite, so the spectral theorem does not apply. It turns out that the trace of $R$ is nonzero in each case; in order to account for rescalings, we consider the characteristic polynomial of the
normalized operator $\tilde{R} =\frac{1}{\Tr R} R$.

We illustrate the computation comparing the metrics obtained by extending the Heisenberg Lie algebra. Extending the diagonal nilsoliton metric, we obtain the Lie algebra $(\frac{1}{6} \sqrt{3} e^{14},\frac{1}{6} \sqrt{3} e^{24},\frac{1}{3} \sqrt{3} e^{12}+\frac{1}{3} \sqrt{3} e^{34},0)$ with metric $e^{1}\otimes e^1-e^2\otimes e^2-e^3\otimes e^3+e^4\otimes e^4$; the Riemann operator $R\colon\Lambda^2\to\Lambda^2$ in the basis $\{e^{12},e^{13},e^{14},e^{23},e^{24},e^{34}\}$ is given by the matrix
\[
\begin{pmatrix}
 \frac{1}{3} & 0 & 0 & 0 & 0 & \frac{1}{6} \\
 0 & \frac{1}{12} & 0 & 0 & \frac{1}{12} & 0 \\
 0 & 0 & \frac{1}{12} & \frac{1}{12} & 0 & 0 \\
 0 & 0 & \frac{1}{12} & \frac{1}{12} & 0 & 0 \\
 0 & \frac{1}{12} & 0 & 0 & \frac{1}{12} & 0 \\
 \frac{1}{6} & 0 & 0 & 0 & 0 & \frac{1}{3} \\
\end{pmatrix},\]
which is diagonalizable.
On the other hand, the extension of the Heisenberg Lie algebra corresponding to the
first entry of Table~\ref{table:34} yields the Einstein Lie algebra $(\frac{6}{7} e^{14},\frac{8}{7} e^{14}-\frac{2}{7} e^{24},\frac{4}{7} e^{12}+\frac{4}{7} e^{34})$ with metric $e^{1}\otimes e^1-e^2\otimes e^2-e^3\otimes e^3+e^4\otimes e^4$; the Riemann operator $R$ is then given by the matrix
\[
\begin{pmatrix}
 \frac{16}{49} & 0 & 0 & 0 & 0 & \frac{8}{49} \\
 0 & \frac{20}{49} & \frac{8}{49} & -\frac{16}{49} &
   -\frac{4}{49} & 0 \\
 0 & -\frac{8}{49} & -\frac{12}{49} & \frac{12}{49} &
   \frac{16}{49} & 0 \\
 0 & \frac{16}{49} & \frac{12}{49} & -\frac{12}{49} &
   -\frac{8}{49} & 0 \\
 0 & -\frac{4}{49} & -\frac{16}{49} & \frac{8}{49} &
   \frac{20}{49} & 0 \\
 \frac{8}{49} & 0 & 0 & 0 & 0 & \frac{16}{49} \\
\end{pmatrix},\]
which is not diagonalizable. Thus, the metrics are not isometric. Notice that the characteristic polynomial  is not sufficient to distinguish these two particular metrics, since in both cases one obtains
$\det(\lambda \id-\tilde R)=\lambda ^6-\lambda ^5+\frac{\lambda ^4}{3}-\frac{5 \lambda ^3}{108}+\frac{\lambda ^2}{432}$.

To compare the rest of the metrics, we use both diagonalizability and the characteristic polynomial of $\tilde R$, which due to the normalization takes the form $\lambda^N-\lambda^{N-1}+a_2\lambda^{N-2}+\dots$. It turns out that the coefficient $a_2$ is sufficient to distinguish metrics obtained from diagonal nilsolitons from those obtained by nondiagonal triples.

In Table~\ref{tbl:diagonalnilsolitonsa2}, we list Einstein solvmanifolds obtained by extending a diagonal nice nilsoliton, the signature and the corresponding value of $a_2$; a check mark~$\checkmark$ in the last column indicates that $R$ is diagonalizable over $\C$. We do not include positive-definite metrics, since the purpose is a comparison with the metrics obtained from Table~\ref{table:34}, which are indefinite by design. Table~\ref{tbl:gennilsolitonsa2} contains analogous data, starting with the metrics of Table~\ref{table:34}.

In both tables, only one entry is given up to equivalence in the sense of Lemma~\ref{lemma:equivalent}. Notice that when more than one signature arises, the signatures are related by an element of $\ker M_{\Delta,2}$. The corresponding metrics are then related by a so-called Wick rotation, so it is not surprising that the Riemann tensor is the same (\cite{Helleland,Conti_Rossi_2022}). Metrics related in this way appear in the same row in the table.

Since $a_2$ is an invariant up to isometry and rescaling, the statement follows by comparing the rows of the two tables.

\begin{table}[thp]
{\renewcommand{\arraystretch}{1.2}\centering
\caption{\label{tbl:diagonalnilsolitonsa2} Indefinite Einstein solvmanifolds obtained by extending a diagonal nice nilsoliton}
\hspace*{-1cm}
\begin{tabular}{C C C C}
\toprule
\g& \mathbf{S}&a_2 &\textnormal{diag.?}\\
\midrule
\frac{1}{6} \sqrt{3} e^{14},\frac{1}{6} \sqrt{3} e^{24},\frac{1}{3} \sqrt{3} e^{12}+\frac{1}{3} \sqrt{3} e^{34},0&\{12,23\}&
\frac{1}{3}&
\checkmark
\\
\frac{1}{6} \sqrt{6} e^{14},\frac{1}{6} \sqrt{6} e^{24},\frac{1}{6} \sqrt{6} e^{34},0&\{1,12,123\}&
\frac{5}{12}&
\checkmark
\\
\midrule
\multicolumn{1}{L}{\frac{1}{30} \sqrt{15} e^{15},\frac{1}{15} \sqrt{15} e^{25},\frac{1}{3} \sqrt{3} e^{12}+\frac{1}{10} \sqrt{15} e^{35},}&
\multirow{2}{*}{$\{124,13,234\}$}&
\multirow{2}{*}{$\frac{2099}{5625}$}&
\multirow{2}{*}{$\checkmark$}\\
\multicolumn{1}{R}{\frac{1}{3} \sqrt{3} e^{13}+\frac{2}{15} \sqrt{15} e^{45},0}&&&
\\
\frac{1}{33} \sqrt{66} e^{15},\frac{1}{33} \sqrt{66} e^{25},\frac{1}{22} \sqrt{66} e^{35},\frac{1}{3} \sqrt{3} e^{12}+\frac{2}{33} \sqrt{66} e^{45},0&\{12,123,134,14,3\}&
\frac{3683}{9075}&
\checkmark
\\
\frac{1}{4} \sqrt{2} e^{15},\frac{1}{4} \sqrt{2} e^{25},\frac{1}{4} \sqrt{2} e^{35},\frac{1}{4} \sqrt{2} e^{45},0&\{1,12,123,1234\}&
\frac{9}{20}&
\checkmark\\
\bottomrule
\end{tabular}
}
\end{table}

\begin{table}[thp]
{\renewcommand{\arraystretch}{1.2}
\centering
\caption{\label{tbl:gennilsolitonsa2} Einstein solvmanifolds obtained by extending a nondiagonal triple}
\hspace*{-0.7cm}
\begin{tabular}{C C C C}
\toprule
\g& \mathbf{S}&a_2 &\textnormal{diag.?}\\
\midrule
\frac{6}{7} e^{14},\frac{8}{7} e^{14}-\frac{2}{7} e^{24},\frac{4}{7} e^{12}+\frac{4}{7} e^{34},0&\{13,23\}&
\frac{1}{3}&\\
e^{14},\frac{6}{5} e^{14}-\frac{1}{5} e^{24},\frac{2}{5} e^{34},0&\{1,13,2,23\}&
\frac{5}{12}&\\
\midrule
\multicolumn{1}{L}{\frac{28}{51} e^{15},\frac{7}{51} \sqrt{66} e^{15}-\frac{7}{17} e^{25},\frac{14}{51} \sqrt{3} e^{12}+\frac{7}{51} e^{35},}&
\multirow{2}{*}{$\{13,234\}$}&\multirow{2}{*}{$\frac{113}{2700}$}&\multirow{2}{*}{$\checkmark$}\\
\multicolumn{1}{R}{\frac{7}{51} \sqrt{34} e^{13}+\frac{35}{51} e^{45},0}&&&
\\
\frac{15}{17} e^{15},\frac{22}{17} e^{15}-\frac{7}{17} e^{25},\frac{6}{17} e^{35},\frac{2}{17} \sqrt{22} e^{12}+\frac{8}{17} e^{45},0&\{134,14,234,24\}&
\frac{3683}{9075}&
\\
0,0,e^{35},\frac{1}{3} \sqrt{6} e^{12}+\frac{2}{3} \sqrt{3} e^{35},0&\{123,14,3\}&
\frac{3}{5}&
\checkmark
\\
\frac{2}{3} e^{15},0,\frac{2}{3} \sqrt{3} e^{15}-\frac{1}{3} e^{35},\frac{1}{3} \sqrt{6} e^{12}+\frac{2}{3} e^{45},0&\{12,14,234,3\}&
\frac{1}{15}&
\checkmark
\\
\frac{1}{3} e^{15},\frac{2}{3} e^{15}-\frac{1}{3} e^{25},\frac{2}{3} e^{35},\frac{2}{9} \sqrt{6} e^{12}+\frac{4}{9} \sqrt{3} e^{35},0&\{14,24\}&
\frac{3}{5}&
\\
e^{15},\frac{4}{3} e^{15}-\frac{1}{3} e^{25},\frac{1}{3} e^{35},\frac{1}{3} e^{45},0&\{1,13,134,2,23,234\}&
\frac{9}{20}&
\\
\frac{3}{5} e^{15},\frac{3}{5} e^{25},\frac{4}{5} e^{15}-\frac{1}{5} e^{35},\frac{4}{5} e^{25}-\frac{1}{5} e^{45},0&\{12,14,34\}&
\frac{9}{20}&
\\
\bottomrule
\end{tabular}
}
\end{table}
\end{proof}

\begin{remark}
In the Riemannian case, an Einstein solvmanifold is determined by the nilradical: two Einstein solvmanifolds with isomorphic nilradicals are isometric (see \cite{Heber_1998,Lauret_2001}). In the indefinite case this is not true: indeed, there exist nilpotent Lie algebras with two nonisometric nilsoliton metrics, and this implies that the corresponding Einstein standard extensions are nonisometric (see \cite[Remark~2.6]{Conti_Rossi_best_before}).

The construction of this paper shows in addition that even if one fixes the metric on the nilradical, the standard Einstein extension is not unique. For instance, consider the two extensions of the Heisenberg Lie algebra given explicitly in the proof of Proposition~\ref{prop:notthesame}. The metrics induced on the nilradical are diagonal metrics on the Heisenberg Lie algebra, so they coincide up to a change of basis and rescaling. However, the extensions are not isometric.

Moreover, we easily see that the pseudo-Iwasawa extension is isomorphic to $M^{12}$ of~\cite{deGraaf:SolvableLie}, whilst the other  is isomorphic to $M^{13}_{\frac{3}{4}}$: thus, they are neither isometric nor isomorphic.
\end{remark}

\FloatBarrier

\medskip
\small\noindent D. Conti: Dipartimento di Matematica, Università di Pisa, largo Bruno Pontecorvo 6, 56127 Pisa, Italy.\\
\texttt{diego.conti@unipi.it}\\
\small\noindent F. A. Rossi: Dipartimento di Matematica e Informatica, Universit\`a degli studi di Perugia, via Vanvitelli 1, 06123 Perugia, Italy.\\
\texttt{federicoalberto.rossi@unipg.it}
\small\noindent R.~Segnan Dalmasso: Dipartimento di Matematica e Applicazioni, Universit\`a di Milano Bicocca, via Cozzi 55, 20125 Milano, Italy.\\
\texttt{romeo.segnandalmasso@unimib.it}\\

\printbibliography

\end{document}